\documentclass[11pt,leqno,a4paper]{amsart}
\usepackage{amsmath}
\usepackage{amsbsy}
\usepackage{amsthm}
\usepackage{amssymb}
\usepackage{amscd}
\usepackage{mathabx}
\usepackage{accents}

\usepackage{xcolor}

\usepackage{tikz}
\usetikzlibrary{matrix,arrows,decorations.pathmorphing}
\usepackage{tikz-cd}

\usepackage{enumerate}
\usepackage{verbatim}

\usepackage{hyperref}
\usepackage{mathabx}

\usepackage[english]{babel}
\usepackage[utf8]{inputenc}
\usepackage[totalheight=22 true cm, totalwidth=16 true cm]{geometry}
\usepackage{mathrsfs}
\usepackage[all]{xy}

\newtheorem{thm}{Theorem}[section]
\newtheorem{cor}[thm]{Corollary}
\newtheorem{lemma}[thm]{Lemma}

\newtheorem{prop}[thm]{Proposition}

\theoremstyle{remark}

\theoremstyle{definition}

\newtheorem{defn}[thm]{Definition}

\newtheorem{rmk}[thm]{Remark}
\newtheorem{exa}[thm]{Example}

\newtheorem{notation}[thm]{Notation}
\numberwithin{equation}{thm}

\def\beq{\begin{equation}}
\def\eeq{\end{equation}}

\def\beqn{\begin{equation*}}
\def\eeqn{\end{equation*}}

\def\ben{\begin{enumerate}}
\def\een{\end{enumerate}}

\def\crash#1{}

\def\L{{\mathbb L}}

\def\cB{{\mathcal B}}
\def\cC{{\mathcal C}}
\def\cD{{\mathcal D}}
\def\cE{{\mathcal E}}
\def\cF{{\mathcal F}}

\def\cL{{\mathcal L}}

\def\cR{{\mathcal R}}
\def\cS{{\mathcal S}}

\def\cX{{\mathcal X}}

\def\bC{{\mathbf C}}

\def\Spec{{\rm Spec \,}}
\def\Hom{{\rm Hom \,}}

\def\id{{\rm id\,}}
\def\ker{{\rm Ker\,}}

\def\id{{\rm Id\,}}

\def\Map{{\rm Map\,}}

\def\solid{{\rm solid\,}}
\def\Ho{{\rm Ho\,}}

\def\CondAb{\mathbf{CondAb}}
\def\bCat{\mathbf{Cat}}

\def\bMod{\mathbf{Mod}}

\def\bCBorn{\mathbf{CBorn}}

\def\bAlg{\mathbf{Alg}}
\def\bFree{\mathbf{Free}}
\def\bFin{\mathbf{Fin}}

\author{Federico Bambozzi}

\title{Homological and homotopical epimorphisms in monoidal $\infty$-categories}

\thanks{The author was supported by the following grants: UNIPD DOR2315777/23 "Algebraic Geometry and Arithmetic Geometry"; MUR PRIN
2022 "The Arithmetic of Motives and L-functions". Part of this work was carried out during a Research-in-Groups programme funded by the International Centre for Mathematical Sciences, Edinburgh.}

\begin{document}

\begin{abstract}
We compare homological and homotopical epimorphisms of associative algebras in monoidal $\infty$-categories. Assume that geometric realizations and pushouts of associative algebras exist and that tensoring preserves geometric realizations. Then every homological epimorphism is homotopical. If, in addition, the ambient category is additive and tensoring preserves finite coproducts, the converse holds. This extends the result of Chuang and Lazarev for dg-algebras. In the symmetric monoidal setting, we discuss consequences for centers and commutative refinements, as well as applications to derived analytic geometry.
\end{abstract}

\maketitle

\tableofcontents

\section{Introduction}

Let $f:A\to B$ be a morphism of associative algebras in a monoidal $\infty$-category $\cC$. The canonical maps
\[
B\ast_A B\longrightarrow B, \qquad B\otimes_A B\longrightarrow B
\]
give rise to two notions of epimorphism. We call $f$ \emph{homotopical} if the first map is an equivalence and \emph{homological} if the second is an equivalence. Here $\ast_A$ denotes the pushout of associative algebras and $\otimes_A$ the relative tensor product of modules. Chuang and Lazarev prove that these notions agree for unbounded dg-algebras \cite[Theorem~4.4]{ChuangLazarev} and ask whether this equivalence reflects a general categorical phenomenon \cite[Remark~4.2]{ChuangLazarev}. We show that the equivalence can fail for associative algebras in spaces, equipped with the cartesian monoidal structure (see Remark~\ref{rmk:additivity_necessary}). Theorem~\ref{thm:main} establishes the equivalence in additive monoidal $\infty$-categories, provided the relevant algebra pushouts and geometric realizations exist and tensoring preserves finite coproducts and geometric realizations.

Our motivation comes from bornological algebra and analytic geometry. See \cite{BambozziBenBassatDagger,BenBassatKremnizerRAG, BambozziBenBassatKremnizerStein, BambozziBenBassatKremnizerFOne,BambozziKremnizerSheafyness} for the bornological approach to derived analytic geometry, and \cite{BenBassatMukherjee,KKM-HKR,BenBassatKellyKremnizer, KellyMukherjeeLocalising, BambozziKremnizerSheafyness, BambozziMiharaZValued, BambozziChiarellottoVanniTempered} for further developments. In derived analytic geometry, localizations are naturally expressed by a tensor idempotence condition
\[ B\widehat\otimes_A^{\mathbb L} B\simeq B. \]
This condition occurs, for instance, in the homological description of affinoid, dagger, and Stein localizations \cite{BenBassatKremnizerRAG,BambozziBenBassatDagger, BambozziBenBassatKremnizerStein,BambozziKremnizerSheafyness, BenBassatMukherjee}. See Section~\ref{sec:applications} for a more detailed account. On the other hand, in the noncommutative geometric framework of \cite{BambozziCapoferriMurro}, localization is formulated intrinsically in the category of associative algebras through the condition
\[
B\ast_A B\simeq B.
\]
The equivalence between the pushout and tensor conditions is a key input in that work. Extending this localization formalism from the algebraic to the analytic  setting therefore requires understanding when the two conditions agree. Establishing this comparison in a general monoidal setting is the main goal of this paper.

The paper is organized as follows. Section~2 collects the preliminaries on algebras, modules, central structures, square-zero extensions, and derivations. Section~3 proves our main theorem. The proof that homotopical epimorphisms are homological epimorphisms is inspired by Silver's argument for ordinary rings \cite{SIL}: we construct a relative derivation with values in $B\otimes_A B$ and show that its triviality forces relative multiplication to be an equivalence. Section~4 discusses the cotangent complex, examples motivating the theorem, and geometric applications. Appendix~\ref{app:closed_proof} gives an alternative proof under the additional assumption that $\cC$ is closed.

\subsection*{Acknowledgements}
\addtocontents{toc}{\protect\setcounter{tocdepth}{1}}
We would like to thank Lidia Angeleri, Jack Kelly, Kobi Kremnizer, Devarshi Mukherjee, Simone Murro, Filippo Papallo, Stefano Rosarin, Giuseppe Rosolini for many helpful discussions on topics related to this paper.

\subsection*{Notation}
\addtocontents{toc}{\protect\setcounter{tocdepth}{1}}

We follow the conventions for simplicial sets and $\infty$-categories established in \cite{HTT}. Let $\Delta$ denote the simplex category, and let
\[
\mathbf{sSet}=\operatorname{Fun}(\Delta^{\mathrm{op}},\mathbf{Set})
\]
denote the category of simplicial sets. For each $n\geq 0$, we write $\Delta^n$ for the standard $n$-simplex and, for $0\leq i\leq n$, we denote by
\[
\Lambda_i^n\hookrightarrow\Delta^n
\]
the inclusion of the $i$-th horn. A simplicial set $\cX_\bullet$ is an \emph{$\infty$-category}, or equivalently a \emph{weak Kan complex}, if every inner horn in $\cX_\bullet$ admits a filler, that is, if it has the right lifting property with respect to $\Lambda_i^n\hookrightarrow\Delta^n$ whenever $0<i<n$; see \cite[Definition~1.1.2.4]{HTT}.

Given two objects $x,y\in\cX_0$, we denote by $\Map_{\cX}(x,y)$ the corresponding mapping space, in the sense of \cite[Definition~1.2.2.1]{HTT}. For an $\infty$-category $\cC$, we write $X\in\cC$ to mean that $X$ is an object of $\cC$. We denote by $\bCat$ the $\infty$-category of small $\infty$-categories. Finally, if two objects $x,y\in\cC$ are equivalent, we write $x\simeq y$.

An $\infty$-category is called \emph{additive} if it has finite biproducts and its canonical commutative monoid structures on mapping spaces are group-like. In particular, subtraction of morphisms is defined coherently. Additivity of $\cC$ does not by itself assert that a chosen tensor product preserves these biproducts; this compatibility will always be stated separately. All fibers, pullbacks, and mapping spaces below are understood in the $\infty$-categorical sense.

Throughout this paper, we use cohomological conventions for complexes.

\section{\texorpdfstring{Monoidal $\infty$-categories}{Monoidal infinity-categories}}

In this section, we recall the basic definitions concerning monoidal
$\infty$-categories and algebra objects and fix the notation used
throughout the paper. We assume familiarity with
$\infty$-categories and coCartesian fibrations, as developed in
\cite{HTT,LandM}, and follow the operadic conventions of \cite{HA}.

Let $\bFin_*$ denote the nerve of the category of finite pointed
sets, and let $\mathrm{Assoc}^\otimes$ denote the associative
$\infty$-operad with its canonical map to $\bFin_*$.

\begin{defn}\label{defn:symmetric_monoidal}
A \emph{monoidal $\infty$-category} is a coCartesian fibration of $\infty$-operads
\[
p:\cC^\otimes\longrightarrow\mathrm{Assoc}^\otimes.
\]
Its underlying $\infty$-category $\cC$ is the fiber over the unique color of $\mathrm{Assoc}^\otimes$. The operadic Segal condition identifies the fiber over $\langle n\rangle$ with $\cC^n$. CoCartesian transport along the ordered binary multiplication and the nullary operation determines the tensor product $\otimes:\cC\times\cC\to\cC$ and its unit $1_\cC$ (see \cite[\S4.1.1]{HA}).

A \emph{symmetric monoidal $\infty$-category} is a coCartesian fibration of $\infty$-operads $\cC^\otimes_{\mathrm{sym}}\to\bFin_*$ (see \cite[Definition~2.0.0.7]{HA}). It determines a monoidal $\infty$-category by the pullback
\[
\cC^\otimes
=\mathrm{Assoc}^\otimes\times_{\bFin_*}
\cC^\otimes_{\mathrm{sym}}.
\]
\end{defn}

\begin{defn}\label{defn:category_algebras}
For a monoidal $\infty$-category
$p:\cC^\otimes\to\mathrm{Assoc}^\otimes$, define
\[
\bAlg(\cC)
=\operatorname{Fun}^{\mathrm{inert}}_{/\mathrm{Assoc}^\otimes}
  (\mathrm{Assoc}^\otimes,\cC^\otimes),
\]
the full subcategory of the functor $\infty$-category over
$\mathrm{Assoc}^\otimes$ spanned by functors preserving inert
morphisms. Thus its objects are inert-preserving sections of $p$,
called \emph{associative algebra objects} in $\cC$.
\end{defn}

Although stated in the language of $\infty$-operads, Definition \ref{defn:category_algebras} is a direct homotopical generalization of the notion of an associative algebra over a monoidal category. We emphasize this further with the following two key examples.

\begin{exa}
For a commutative dg-algebra $A$ over a commutative ring $k$, let $\bMod_A$ denote the derived $\infty$-category of dg $A$-modules, obtained by inverting quasi-isomorphisms and equipped with the derived tensor product. Then $\bAlg(\bMod_A)$ is modeled by associative dg $A$-algebras with quasi-isomorphisms inverted. If $\mathbf{Sp}$ denotes spectra with the smash product, $\bAlg(\mathbf{Sp})$ is the $\infty$-category of associative ring spectra.
\end{exa}

The following is a standard result.

\begin{lemma} \label{lemma:free_algebras}
Let $(\cC, \otimes)$ be a monoidal $\infty$-category with countable coproducts, and assume that $\otimes$ preserves countable coproducts in each variable. Then the forgetful functor $\bAlg(\cC) \to \cC$ admits a left adjoint, called the \emph{free algebra functor}.
\end{lemma}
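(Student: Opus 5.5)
The approach is to invoke Lurie's construction of free algebras, \cite[Proposition~4.1.1.18]{HA}, and to check that its hypotheses are the ones in the statement. For a fixed $X\in\cC$ the candidate left adjoint is the tensor algebra
\[
T(X)=\coprod_{n\geq 0} X^{\otimes n},
\]
with $X^{\otimes 0}=1_\cC$. The first step is to see that this coproduct is the operadic left Kan extension of $X$ along the inclusion of the trivial $\infty$-operad into $\mathrm{Assoc}^\otimes$. Concretely, the left Kan extension at the unique color is computed as a colimit over the $\infty$-category of active morphisms $\langle n\rangle\to\langle 1\rangle$ in $\mathrm{Assoc}^\otimes$, each restricted to inert data from the trivial operad. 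This category is equivalent to a disjoint union, indexed by $n\ge 0$, of contractible groupoids, because an active map $\langle n\rangle \to \langle 1\rangle$ in $\mathrm{Assoc}^\otimes$ is exactly a linear ordering of $\langle n\rangle^\circ$. So the relevant colimit diagram is discrete, and the colimit is the countable coproduct $\coprod_n X^{\otimes n}$. This is precisely where countable coproducts are needed.

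The second step, which I expect to be the main obstacle, is to verify that this colimit is \emph{operadic}. In the sense of \cite[Definition~3.1.1.2]{HA} this means it remains a colimit after tensoring on the left and right with arbitrary objects, that is, after applying $Y\otimes(-)\otimes Z$. This is exactly the hypothesis that $\otimes$ preserves countable coproducts separately in each variable. Iterating that hypothesis gives
\[
Y_1\otimes\cdots\otimes Y_k\otimes T(X)\otimes Z_1\otimes\cdots\otimes Z_l\simeq\coprod_n Y_1\otimes\cdots\otimes X^{\otimes n}\otimes\cdots\otimes Z_l .
\]
The point to be careful about is that the diagrams arising in the existence criterion \cite[Corollary~3.1.3.5]{HA} are indexed by categories of active maps, and these decompose into countable disjoint unions of contractible pieces. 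Countability and discreteness are therefore enough, and no sifted or general colimits need to be preserved.

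With both conditions in hand, \cite[Proposition~3.1.3.3 and Corollary~3.1.3.5]{HA} give that the free algebra exists objectwise and is left adjoint to the forgetful functor $\bAlg(\cC)\to\cC$. The adjunction can then be assembled from the objectwise universal properties by the standard pointwise criterion for adjoints, \cite[Proposition~5.2.4.2]{HTT}.

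If one prefers to avoid the operadic machinery, a monadic route is also available. One defines the monad $T$ on $\cC$ directly, and then identifies $\bAlg(\cC)$ with the $\infty$-category of $T$-algebras via Barr--Beck--Lurie. However, the forgetful functor preserving the required split geometric realizations still needs justification, so I would take the operadic route above.
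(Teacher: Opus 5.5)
Your proposal is correct and follows essentially the same route as the paper. Both identify the free algebra with the tensor algebra $\coprod_{n\geq0}X^{\otimes n}$ and defer the coherent algebra structure and the universal property to Lurie: the paper cites \cite[Theorem~1.4.2]{DAGII}, while you cite \cite[Proposition~4.1.1.18]{HA} and unpack its operadic free-algebra argument from \cite[\S3.1.3]{HA}, which makes explicit where the hypotheses enter (the indexing groupoid of active maps is a countable disjoint union of contractible pieces, and operadicity of the coproduct is exactly two-sided preservation of countable coproducts) where the paper only says the coherences are inherited from the monoidal structure.
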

\begin{proof}
For $X \in \cC$, the free algebra has underlying object
\[
\bFree_\cC(X)=\coprod_{n\geq0}X^{\otimes n},
\qquad X^{\otimes0}=1_\cC.
\]
Concatenation defines its multiplication and the $n=0$ summand defines its unit, with the coherences inherited from the monoidal structure. See \cite[Theorem~1.4.2]{DAGII}.
\end{proof}

We will also use the following lemma.

\begin{lemma}\label{lemma:lax_ff_alg}
Let
\[
F^\otimes:\cC^\otimes\longrightarrow\cD^\otimes
\]
be a lax monoidal functor between monoidal $\infty$-categories, and let
\[
F:\cC\longrightarrow\cD
\]
be the underlying functor. Assume that
\begin{itemize}
\item[-] $F$ is fully faithful;
\item[-] for every $X,Y\in\cC$, the lax monoidal structure map
\[
F(X)\otimes F(Y)\longrightarrow F(X\otimes Y)
\]
is an equivalence.
\end{itemize}
Then the induced functor
\[
\bAlg(\cC)\longrightarrow\bAlg(\cD)
\]
is fully faithful.
\end{lemma}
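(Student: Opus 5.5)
The plan is to reduce the claim to the corresponding statement about $\infty$-operads, where full faithfulness can be checked on mapping spaces of the total categories. The two hypotheses make $F^\otimes$ a fully faithful functor of total $\infty$-categories $\cC^\otimes\to\cD^\otimes$, and a fully faithful functor between total categories over $\mathrm{Assoc}^\otimes$ induces a fully faithful functor on sections.

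\textbf{Step 1: $F^\otimes$ is fully faithful on total categories.} Take objects of $\cC^\otimes$ over $\langle n\rangle$ and $\langle m\rangle$. By the Segal condition they are tuples $(X_1,\dots,X_n)$ and $(Y_1,\dots,Y_m)$. The mapping space in $\cC^\otimes$ fibers over $\Map_{\mathrm{Assoc}^\otimes}(\langle n\rangle,\langle m\rangle)$. Over an active-type morphism $\alpha$, the fiber is
\[
\prod_{j=1}^m\Map_\cC\Bigl(\bigotimes_{\alpha(i)=j}X_i,\;Y_j\Bigr),
\]
computed by coCartesian transport (\cite[\S2.1.2]{HA}), with the tensor product taken in the order prescribed by $\alpha$. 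The map induced by $F^\otimes$ on fibers is the composite
\[
\Map_\cC\Bigl(\bigotimes X_i,Y_j\Bigr)\to\Map_\cD\Bigl(F\bigl(\bigotimes X_i\bigr),F(Y_j)\Bigr)\to\Map_\cD\Bigl(\bigotimes F(X_i),F(Y_j)\Bigr),
\]
where the second map is precomposition with the lax structure map $\bigotimes F(X_i)\to F(\bigotimes X_i)$.

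The first map is an equivalence because $F$ is fully faithful. The second is an equivalence once the iterated lax structure maps are equivalences. For $n\ge 2$ this follows by induction from the binary hypothesis, using the associativity coherence of the lax structure.

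\textbf{Step 1, the obstacle: the unit case $n=0$.} Here one needs $1_\cD\to F(1_\cC)$ to be an equivalence, and this is not literally among the stated hypotheses. I expect this to be the delicate point. My first approach is the following. For $n=0$, the mapping space $\Map_\cD(1_\cD,F(Y))$ should be compared with $\Map_\cC(1_\cC,Y)$. I would either interpret "lax monoidal structure maps" as including the nullary one, making this an equivalence, or check that it is not needed.

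The check goes through the Segal decomposition: algebra sections only involve $1$ through the unit map $1\to A$. Restricting attention to sections avoids the issue only partially. So I would record it as part of the intended hypothesis, meaning that $F$ is strong monoidal on the essential image. Then the fibers over every morphism of $\mathrm{Assoc}^\otimes$ match, and $F^\otimes$ is fully faithful, since a map of fibrations over the same base that is an equivalence on all fibers is an equivalence on total mapping spaces.

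\textbf{Step 2: full faithfulness passes to sections.} $\bAlg(\cC)$ is a full subcategory of $\operatorname{Fun}_{/\mathrm{Assoc}^\otimes}(\mathrm{Assoc}^\otimes,\cC^\otimes)$, and $F^\otimes$ preserves inert morphisms, since lax monoidal functors are maps of $\infty$-operads. So it suffices to show that postcomposition with a fully faithful functor $\cC^\otimes\to\cD^\otimes$ over $\mathrm{Assoc}^\otimes$ is fully faithful on relative functor categories.

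For this I use the end formula: mapping spaces between sections $s,t$ are the end over $\mathrm{Assoc}^\otimes$ of the fibers of $\Map(s(x),t(y))\to\Map(x,y)$ over the given arrows. Equivalently, a functor that is fully faithful on total categories and compatible with the projection is fully faithful on fibers. These ends are therefore identified termwise, and the induced map on ends is an equivalence.

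Alternatively, I would use the standard fact that postcomposition with a fully faithful functor is fully faithful on $\operatorname{Fun}(K,-)$, applied in the pullback square defining sections, \cite[Proposition~3.1.2.1]{HTT}-style. This gives the claim.
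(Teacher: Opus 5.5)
\textbf{There is a genuine gap at the unit.} Your Step~2 is fine, and so is Step~1 for the fibers with $n\ge 1$. But you do not prove the lemma as stated: in the nullary case you dispose of the obstacle by adding the hypothesis that $u:1_\cD\to F(1_\cC)$ is an equivalence. The statement assumes only the binary comparison maps are equivalences, and this is deliberate (see the remark after the lemma).

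In the main theorem the lemma is applied to restriction $U_f:{}_B\bMod_B\to{}_A\bMod_A$ along a homological epimorphism. Its lax unit is $f:A\to B$, which in general is not an equivalence. So with your extra hypothesis the argument proves a statement too weak for its use. Without it, Step~1 is unproved: over the nullary operation the map on fibers is
\[
\Map_\cC(1_\cC,Y)\longrightarrow\Map_\cD(F(1_\cC),F(Y))\xrightarrow{\;u^*\;}\Map_\cD(1_\cD,F(Y)),
\]
and you give no reason why $u^*$ is an equivalence.

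\textbf{The missing idea} is that the binary hypothesis already forces $u^*$ to be an equivalence.
\begin{enumerate}
\item Put $E=F(1_\cC)$, an algebra with unit $u$. Its multiplication $E\otimes E\to F(1_\cC\otimes 1_\cC)\simeq E$ is a binary comparison map, hence an equivalence. Both $u\otimes\id_E$ and $\id_E\otimes u$ are sections of it, so they are homotopic.
\item For $Y\in\cC$, the unit axiom of the lax structure says $u\otimes\id_{F(Y)}$ is a section of the equivalence $E\otimes F(Y)\to F(1_\cC\otimes Y)\simeq F(Y)$. Hence $u\otimes\id_{F(Y)}$ is itself an equivalence.
\item From these two facts, bifunctoriality of $\otimes$ shows that $g\mapsto (u\otimes\id_{F(Y)})^{-1}\circ(\id_E\otimes g)$ is inverse to $u^*$.
\end{enumerate}
So the nullary fibers match, $F^\otimes$ is fully faithful on total categories, and your Step~2 then finishes the proof.

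For comparison, the paper handles the unit differently. It first proves full faithfulness on nonunital algebras, via the nonunital monoidal structure on the essential image. It then uses Lurie's quasi-unit theorem to reduce unitality to the condition $g\eta_A\simeq\eta_B$. It detects that condition by noting that maps in the image of $F$ are $E$-linear and that $E$ is the free left $E$-module on $1_\cD$. Either way, some argument of this kind must replace the assumption you recorded.
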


\begin{rmk}
We do not assume that the unit map
\[
1_{\cD}\longrightarrow F(1_{\cC})
\]
is an equivalence. Thus $F$ need not be strongly monoidal: under the
hypotheses of the lemma, the only possible failure of strong monoidality
occurs at the unit.
\end{rmk}

\begin{proof}
We first show that the induced functor on nonunital associative
algebras is fully faithful. Let $\cE\subseteq\cD$ be the full
essential image of $F$. Since the tensor comparison maps are
equivalences, $\cE$ is closed under tensor products. Applying
\cite[Proposition~2.2.1.1(1)--(2)]{HA} over the nonunital
associative operad $\mathrm{Assoc}^{\otimes}_{\mathrm{nu}}$,
we equip $\cE$ with the inherited nonunital monoidal structure.
The functor $F:\cC\to\cE$ is then a nonunital monoidal functor
whose underlying functor is an equivalence. Hence it is a
nonunital monoidal equivalence by
\cite[Remark~2.1.3.8]{HA}, and therefore induces an equivalence
\[
\bAlg^{\mathrm{nu}}(\cC)
\simeq
\bAlg^{\mathrm{nu}}(\cE).
\]
Moreover, the full inclusion $\cE\subseteq\cD$ induces a fully
faithful functor on nonunital associative algebras, directly
from their definition as operadic functors (see
\cite[Definition~2.1.2.7]{HA} for the definition of morphism between $\infty$-operads). Composing these functors proves
that
\[
\bAlg^{\mathrm{nu}}(\cC)
\longrightarrow
\bAlg^{\mathrm{nu}}(\cD)
\]
is fully faithful.

By Lurie's quasi-unit theorem
\cite[Theorem~5.4.3.5 and Corollary~5.4.3.6]{HA},
the mapping space of unital algebra morphisms between $A$ and $B$
identifies with the union of components of the nonunital algebra
mapping space represented by maps $g$ satisfying
$g\eta_A\simeq\eta_B$.
Since full faithfulness on nonunital algebras has already been
proved, it remains only to show that $F$ detects this condition.

Put $E=F(1_\cC)$ and let $u:1_\cD\to E$ be its unit.
The lax monoidal structure equips each $F(X)$ with a canonical
left $E$-module structure, functorially in $X$, through
\[
E\otimes F(X)
\longrightarrow F(1_\cC\otimes X)
\simeq F(X).
\]
In particular, applying $F$ to any morphism in $\cC$ gives an
$E$-linear morphism. Let $A,B$ be unital associative algebras in $\cC$, let
$g:A\to B$ be a morphism of their underlying nonunital algebras, and $\eta_A: 1_\cC \to A$ and $\eta_B: 1_\cC \to B$ their units.
The maps
\[
\alpha = F(g\eta_A),
\qquad
\beta = F(\eta_B)
\]
are therefore $E$-linear morphisms from $E$ to $F(B)$.
Since the regular left $E$-module is free on $1_\cD$,
precomposition with $u$ induces an equivalence
\[
u^*:
\Map_{{}_E\bMod}(E,F(B))
\xrightarrow{\;\simeq\;}
\Map_\cD(1_\cD,F(B));
\]
see \cite[Corollary~4.2.4.7]{HA}. 
The units of the induced algebras are
\[
\eta_{F(A)}=F(\eta_A)\circ u,
\qquad
\eta_{F(B)}=F(\eta_B)\circ u.
\]
Hence, if $F(g)$ preserves the unit up to homotopy, then
\[
u^*(\alpha)
=
F(g)\circ\eta_{F(A)}
\simeq
\eta_{F(B)}
=
u^*(\beta).
\]
The equivalence $u^*$ gives $\alpha\simeq\beta$ as
$E$-linear morphisms. Forgetting the module structure and
using full faithfulness of $F$, we obtain
\[
g\eta_A\simeq\eta_B.
\]
Conversely, this homotopy immediately implies that $F(g)$
preserves the unit up to homotopy. Thus $F$ preserves and
detects this property.

To pass from this condition to coherent unital algebra
morphisms, we use Lurie's quasi-unit theorem
\cite[Theorem~5.4.3.5 and Corollary~5.4.3.6]{HA}.
A quasi-unit of a nonunital associative algebra is a morphism
from the monoidal unit whose left and right multiplication
maps are homotopic to the identity. For an algebra already
equipped with a unit, every quasi-unit is homotopic to that
unit. Consequently, a nonunital algebra morphism $g:A\to B$
is quasi-unital precisely when $g\eta_A\simeq\eta_B$.

Thus the equivalence of nonunital mapping spaces restricts to
an equivalence of unital mapping spaces, proving the claim.
\end{proof}

We now recall the module operads associated with an algebra object.

\begin{notation}\label{not:left_module_operad}
Let $\mathcal L\mathrm{Mod}^\otimes$ and $\mathcal R\mathrm{Mod}^\otimes$ denote the left- and right-module $\infty$-operads of \cite[\S4.2.1]{HA}. Their canonical input orders determine maps to $\mathrm{Assoc}^\otimes$: the module input is last in a left-module operation and first in a right-module operation. We write
\[
\cL\mathrm{Mod}(\cC)
=\operatorname{Fun}^{\mathrm{inert}}_{/\mathrm{Assoc}^\otimes}
 (\mathcal L\mathrm{Mod}^\otimes,\cC^\otimes)
\]
and define $\cR\mathrm{Mod}(\cC)$ analogously using $\mathcal R\mathrm{Mod}^\otimes$. Here the superscript $\mathrm{inert}$ denotes the full subcategory spanned by functors preserving inert morphisms (see \cite[Example~4.2.1.16]{HA}). Restriction along the algebra-color inclusion $\mathrm{Assoc}^\otimes\hookrightarrow \mathcal L\mathrm{Mod}^\otimes$ gives $\cL\mathrm{Mod}(\cC)\to\bAlg(\cC)$, and similarly for right modules.
\end{notation}

\begin{defn} \label{defn:left_right_modules}
    Let $A\in\bAlg(\cC)$. The \emph{$\infty$-category of left $A$-modules}, denoted ${}_A\bMod$, is the fiber over $A$ of the canonical functor  $\cL\mathrm{Mod}(\cC)\longrightarrow\bAlg(\cC)$. Similarly, the \emph{$\infty$-category of right $A$-modules}, denoted $\bMod_A$, is the fiber over $A$ of $\cR\mathrm{Mod}(\cC)\longrightarrow\bAlg(\cC)$.
\end{defn}

We now recall the notion of bimodule.

\begin{notation}\label{not:bimodule_operad}
Let $\cB\mathrm{Mod}^\otimes$ denote the bimodule
$\infty$-operad of \cite[\S4.3.1]{HA}.
Its canonical map to $\mathrm{Assoc}^\otimes$ records the
input order: the left algebra inputs precede the module input,
and the right algebra inputs follow it. We set
\[
\cB\mathrm{Mod}(\cC)
=\operatorname{Fun}^{\mathrm{inert}}_{/\mathrm{Assoc}^\otimes}
 (\cB\mathrm{Mod}^\otimes,\cC^\otimes);
\]
see \cite[Example~4.3.1.15]{HA}.
\end{notation}

Algebras over $\cB\mathrm{Mod}^{\otimes}$ encode triples $(A_-,M,A_+)$ consisting of two associative algebras and an $A_-$-$A_+$-bimodule, see \cite[Remark~4.3.1.14]{HA}.

\begin{rmk}
The bimodule operad contains the left- and right-module operads and two copies of the associative operad, selecting the two algebra objects \cite[Remarks~4.3.1.9--4.3.1.10]{HA}. Bimodules can also be described as right-module objects in the category of left modules, using its right $\cC$-action \cite[Theorem~4.3.2.7]{HA}. 
\end{rmk}

For $A,B\in\bAlg(\cC)$, we denote by ${}_A\bMod_B$ the fiber of $ \cB\mathrm{Mod}(\cC)\longrightarrow \bAlg(\cC)\times\bAlg(\cC)$ over $(A,B)$. Thus ${}_A\bMod_B$ is the $\infty$-category of $A$-$B$-bimodules in $\cC$. When $A=B$, assume that $\cC$ admits geometric realizations of simplicial objects and that $\otimes$ preserves them separately in each variable. Then ${}_A\bMod_A$ carries a monoidal structure whose tensor product is the relative tensor product $-\otimes_A-$, see \cite[Proposition~4.4.3.12]{HA}.

\begin{prop} \label{prop:bimodules_forget_conservative}
    The forgetful functors ${}_A \bMod_A \to \bMod_A$ and ${}_A \bMod_A \to {}_A \bMod$ are conservative.
\end{prop}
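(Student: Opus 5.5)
The plan is to reduce conservativity of the forgetful functors to the conservativity of forgetful functors from modules over an algebra to $\cC$ itself. For any algebra $R\in\bAlg(\cC)$, the forgetful functors ${}_R\bMod\to\cC$ and $\bMod_R\to\cC$ are conservative. This holds because a morphism of left $R$-modules whose underlying map in $\cC$ is an equivalence is itself an equivalence; see \cite[Corollary~4.2.3.2]{HA} (module categories over an algebra are $\infty$-categories of algebras for a monad, or more directly, equivalences in $\operatorname{Fun}^{\mathrm{inert}}_{/\mathrm{Assoc}^\otimes}(\mathcal L\mathrm{Mod}^\otimes,\cC^\otimes)$ are detected objectwise on colors). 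The same objectwise criterion applies to bimodules. A natural transformation between functors $\cB\mathrm{Mod}^\otimes\to\cC^\otimes$ is an equivalence as soon as its components at the objects lying over $\langle 1\rangle$ (the two algebra colors and the module color) are equivalences. The remaining components are products of these by the Segal condition and the inert-preservation hypothesis.

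Given this, I would argue as follows. Let $\phi:M\to N$ be a morphism in ${}_A\bMod_A$ whose image in $\bMod_A$ is an equivalence. Since morphisms in the fiber ${}_A\bMod_A$ lie over the identity of $(A,A)$, the components of $\phi$ at the two algebra colors are equivalences (identities). The image of $\phi$ in $\bMod_A$ is an equivalence, so its underlying morphism in $\cC$, which is the component at the module color, is an equivalence, since forgetting further to $\cC$ preserves equivalences. By the objectwise criterion in $\operatorname{Fun}_{/\mathrm{Assoc}^\otimes}(\cB\mathrm{Mod}^\otimes,\cC^\otimes)$ (\cite[Theorem~3.1.2.1]{HTT}-type fact: equivalences in functor $\infty$-categories are detected pointwise, and pointwise over $\langle n\rangle$ reduces to colors via the Segal condition), $\phi$ is an equivalence in $\cB\mathrm{Mod}(\cC)$. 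It lies in the fiber over $(A,A)$, and the inclusion of a fiber detects equivalences, so $\phi$ is an equivalence in ${}_A\bMod_A$. The argument for ${}_A\bMod_A\to{}_A\bMod$ is symmetric. Alternatively, one can use \cite[Theorem~4.3.2.7]{HA} to identify ${}_A\bMod_A$ with right $A$-modules in ${}_A\bMod$, and then apply conservativity of ${}_A\bMod\to\cC$ and of $\bMod_A({}_A\bMod)\to{}_A\bMod$.

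The only real subtlety is the passage from ``equivalence on each color'' to ``equivalence of the whole operadic functor'', i.e.\ that equivalences in $\operatorname{Fun}^{\mathrm{inert}}_{/\mathrm{Assoc}^\otimes}$ are detected on objects over $\langle 1\rangle$. I would handle it by citing that natural transformations are equivalences iff they are pointwise equivalences (\cite[Theorem~3.1.2.1]{HTT}, or \cite[Proposition~3.1.2.1]{HA}-style statements for algebras over operads), together with the observation that, because the sections preserve inert morphisms and $\cC^\otimes_{\langle n\rangle}\simeq\cC^n$, a component over $\langle n\rangle$ is the product of its $n$ color components. Everything else is formal.
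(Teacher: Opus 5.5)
Your argument is correct and follows the same route as the paper. Both reduce the claim to conservativity of the composite forgetful functor ${}_A\bMod_A\to\cC$, and both use that this functor factors through ${}_A\bMod$ and through $\bMod_A$.

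The only difference is how that conservativity is obtained. The paper cites \cite[Corollary~4.3.3.2]{HA}, whereas you derive it directly: equivalences of inert-preserving operad maps are detected on colors, and fiber inclusions detect equivalences. Your opening remark on the one-sided forgetful functors ${}_R\bMod\to\cC$ is not actually used in the argument.
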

\begin{proof}
By \cite[Corollary 4.3.3.2 and Example 4.7.3.9]{HA}, the forgetful functor
\[
{}_A\bMod_A \to \cC
\]
is conservative. This functor factors through each of the two forgetful
functors in the statement. Hence, if a morphism of $A$-bimodules becomes
an equivalence after forgetting either the left or the right action, then
its underlying morphism in $\cC$ is an equivalence, and therefore the
original morphism of bimodules is an equivalence.
\end{proof}

For $A\in\bAlg(\cC)$, we denote by
\[
\bAlg(\cC)_{A/}
\]
the undercategory of $A$ in $\bAlg(\cC)$, whose objects will also be
called \emph{$A$-algebras}.

\begin{prop}\label{prop:algebras_bimodules}
Let $(\cC,\otimes)$ be a monoidal $\infty$-category, and let
$A \in \bAlg(\cC)$. Assume that $\cC$ admits geometric realizations of
simplicial objects and that $\otimes$ preserves geometric realizations
separately in each variable. Then there is an equivalence of
$\infty$-categories
\[
\bAlg({}_A\bMod_A)\simeq \bAlg(\cC)_{A/},
\]
where ${}_A\bMod_A$ is endowed with its monoidal structure given by the
relative tensor product $\otimes_A$.
\end{prop}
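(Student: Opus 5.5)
The plan is to deduce the statement from Lurie's identification of algebras in bimodules with algebras under a fixed algebra \cite[Corollary~4.4.3.13 / Proposition~4.4.3.12]{HA}, and to make the comparison explicit in both directions.

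\emph{Step 1: the functor from right to left.} Given $f:A\to B$ in $\bAlg(\cC)_{A/}$, the multiplication of $B$ together with $f$ makes $B$ an $A$-$A$-bimodule via restriction of scalars along $(f,f)$. This uses the bimodule operad $\cB\mathrm{Mod}^\otimes$ of Notation~\ref{not:bimodule_operad} and the restriction $\cB\mathrm{Mod}(\cC)\to\bAlg(\cC)\times\bAlg(\cC)$ pulled back along the diagonal-type map $A\to B$. The multiplication $B\otimes B\to B$ is $A$-balanced. Since the relative tensor product $B\otimes_A B$ is the geometric realization of the bar construction $B\otimes A^{\otimes\bullet}\otimes B$, which exists by hypothesis and is preserved by $\otimes$, the multiplication descends to a map $B\otimes_A B\to B$. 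The unit is $f:A\to B$, which is a bimodule map from the unit object $A$ of ${}_A\bMod_A$. To obtain all coherences at once, I would not build this by hand. Instead I would invoke the general statement that, for $\cC$ with geometric realizations compatible with $\otimes$, the monoidal structure on ${}_A\bMod_A$ of \cite[Proposition~4.4.3.12]{HA} comes with an identification $\bAlg({}_A\bMod_A)\simeq \bAlg(\cC)_{A/}$; this is the content of \cite[Corollary~4.4.3.13 / Remark~4.4.3.14]{HA}.

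\emph{Step 2: the functor from left to right.} There is a lax monoidal forgetful functor ${}_A\bMod_A\to\cC$, with structure maps $M\otimes N\to M\otimes_A N$ and unit $1_\cC\to A$. It induces a functor $\bAlg({}_A\bMod_A)\to\bAlg(\cC)$. The unit $A\to R$ of an algebra $R$ in bimodules is an algebra map in $\cC$, because $A$ is the unit algebra of ${}_A\bMod_A$ and its image is $A$ with its own multiplication. This lifts the functor to $\bAlg(\cC)_{A/}$.

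\emph{Step 3: the two functors are inverse.}
\begin{enumerate}
\item Starting from $f:A\to B$, going to bimodules and back returns $B$ with the same multiplication, since the composite $B\otimes B\to B\otimes_A B\to B$ is the original multiplication.
\item Starting from $R$ in $\bAlg({}_A\bMod_A)$, the bimodule structure recovered from $A\to R$ agrees with the original one. The left action $A\otimes R\to R$ factors as $A\otimes R\to R\otimes R\to R\otimes_A R\to R$, and unitality of $R$ in ${}_A\bMod_A$ identifies this with the given action. Conservativity (Proposition~\ref{prop:bimodules_forget_conservative}) then reduces checking that the unit and counit are equivalences to a check in $\cC$.
\end{enumerate}

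The main obstacle is coherence: establishing these identifications as an equivalence of $\infty$-categories, not just on objects and homotopy categories. This is exactly why I rely on Lurie's operadic result rather than on the hands-on description. The hypotheses of that result, namely existence of geometric realizations and their preservation by $\otimes$ in each variable, match ours, so the citation applies directly.
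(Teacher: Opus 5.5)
Your proposal is essentially the paper's argument. In both, the coherent equivalence is delegated entirely to Lurie's operadic machinery, and your Steps 1--3 amount to the paper's closing description of the correspondence (unit $A\to B$, multiplication $B\otimes_A B\to B$), which you rightly do not treat as a proof.

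The one thing to tighten is the load-bearing reference. Instead of the hedged ``Corollary~4.4.3.13 / Remark~4.4.3.14'', the paper uses the following chain:
\begin{enumerate}
\item \cite[Theorem~4.4.1.28]{HA} identifies ${}_A\bMod_A$ with associative operadic $A$-modules.
\item Coherence of $\mathrm{Assoc}^\otimes$ \cite[Proposition~4.1.1.20]{HA} lets one apply \cite[Corollary~3.4.1.7]{HA}, which gives the equivalence with $\bAlg(\cC)_{A/}$.
\item Under your geometric-realization hypotheses, \cite[Proposition~4.4.3.12]{HA} matches the resulting tensor product with $\otimes_A$.
\end{enumerate}
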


\begin{proof}
Theorem~4.4.1.28 of \cite{HA} identifies $A$-bimodules with associative operadic modules over $A$. Under the stated hypotheses on geometric realizations, their relative tensor product gives the monoidal structure of \cite[Proposition~4.4.3.12]{HA}. The associative operad is coherent \cite[Proposition~4.1.1.20]{HA}, so \cite[Corollary~3.4.1.7]{HA} gives the asserted equivalence. Under it an algebra in $A$-bimodules has unit $A\to B$ and multiplication $B\otimes_A B\to B$, and the associated associative algebra in $\cC$ is equipped with that unit map from $A$.
\end{proof}

We will also need the notion of center of an algebra. Notice that if $\cC$ is closed symmetric monoidal and admits
totalizations of cosimplicial objects, then the
$\infty$-category ${}_B\bMod_B$ is naturally enriched
over $\cC$. Indeed, its internal mapping objects can be
constructed by totalizing the cosimplicial internal Hom
objects associated with the bar resolution of a
bimodule (see \cite[\S\S4.3.3 and 4.4.2]{HA}).

\begin{defn} \label{defn:center}
Let \(B\in \bAlg(\cC)\). We define the \emph{center space} of \(B\) as
\[
\mathcal Z(B) = \Map_{{}_B\bMod_B}(B,B).
\]
If $\cC$ is closed symmetric monoidal and admits
totalizations of cosimplicial objects, the \emph{center object} (or just \emph{center}) of \(B\) is defined by
\[
\underline{\mathcal Z}(B)=\underline{\Hom}_{{}_B\bMod_B}(B, B),
\]
the internal endomorphism object of \(B\) viewed as a \(B\)-\(B\)-bimodule.
\end{defn}

\begin{rmk}\label{rmk:center_E_2}
Under the hypotheses ensuring the enriched mapping objects above, and assuming that the relative tensor product commutes with geometric realizations, the center object carries a canonical $E_2$-algebra structure: it is the endomorphism object of the monoidal unit $B$ in $({}_B\bMod_B,\otimes_B)$. See \cite[Theorem~5.3.1.30]{HA} and Dunn additivity \cite[Theorem~5.1.2.2]{HA}. More generally, if $\cC$ is presentable symmetric monoidal and its tensor product preserves small colimits separately, the center of an $E_k$-algebra exists and carries a canonical $E_{k+1}$-algebra structure, see
\cite[Corollary~5.3.1.15]{HA}.
\end{rmk}

For ordinary associative algebras over a commutative ring $A$, centrality of a morphism $f:A\to B$ is the condition that its image lie in the center of $B$. Central $A$-algebras therefore form a full subcategory of the category of associative algebras under $A$. In the homotopical setting, a central structure includes the coherent compatibility of the $A$-action with the algebra structure on $B$. We must therefore distinguish an associative algebra under $A$ from one equipped with a chosen central structure. The comparison functor between central algebras and algebras forgets this structure and therefore need not be fully faithful: morphisms of central $A$-algebras are required to preserve it coherently. We now impose the further hypothesis that $\cC$ is symmetric monoidal.

\begin{defn}\label{defn:central_algebra}
Let $A\in\bC\bAlg(\cC)$, and suppose that the relative tensor products defining the symmetric monoidal category $(\bMod_A,\otimes_A)$ exist. A \emph{central $A$-algebra} is an associative algebra object of $(\bMod_A,\otimes_A)$. Such an object has an underlying associative algebra $B$ in $\cC$, and its unit induces an associative algebra morphism $f:A\to B$. This construction defines a natural functor
\[
\Phi_A:
\bAlg(\bMod_A)\longrightarrow\bAlg(\cC)_{A/}.
\]
For a specified associative algebra morphism $f:A\to B$,
a \emph{central structure on $f$} is an object
$\widetilde B\in\bAlg(\bMod_A)$ together with an equivalence
\[
\Phi_A(\widetilde B)\simeq(f:A\to B)
\]
in $\bAlg(\cC)_{A/}$.
\end{defn}

In particular, centrality supplies a coherent identification of the two $A$-actions
\[
A\otimes B \xrightarrow{f\otimes\id}B\otimes B\xrightarrow{\mu_B}B,
\qquad
A\otimes B\xrightarrow{s_{A,B}}B\otimes A
 \xrightarrow{\id\otimes f}B\otimes B\xrightarrow{\mu_B}B.
\]

We give an example of two different central structures on a dg-algebra over a field.

\begin{exa}
Let $k$ be a field of characteristic zero, and put
\[
A=k[t],
\qquad
B=k[x]\otimes_k\Lambda_k(\epsilon),
\qquad
|t|=|x|=0,\quad |\epsilon|=-1,
\]
with zero differentials. Fix the associative algebra morphism
\[
f:A\longrightarrow B,\qquad t\longmapsto0.
\]

For $\lambda\in\{0,1\}$, define a dg $A$-algebra $C_\lambda$
by adjoining generators $x,\epsilon,h$, with
\[
|x|=0,\qquad |\epsilon|=|h|=-1,
\qquad
d(x)=d(\epsilon)=0,\quad d(h)=t.
\]
The element $t$ is central, and the remaining relations are
\[
x\epsilon=\epsilon x,\qquad
\epsilon^2=h^2=0,\qquad
h\epsilon=-\epsilon h,\qquad
hx-xh=\lambda\epsilon.
\]
The differential respects these relations. Since $t$ is
central, each $C_\lambda$ is an associative algebra object
of $\bMod_A$. The relations give 
\[
C_\lambda
\cong
A[x]\oplus A[x]\epsilon
\oplus A[x]h\oplus A[x]\epsilon h
\]
as graded $A$-modules. Thus, as complexes,
\[
C_\lambda
\cong
B\otimes_k
\bigl(k[t]\otimes_k\Lambda_k(h),\ d(h)=t\bigr).
\]
It follows that the inclusion
\[
i_\lambda:B\longrightarrow C_\lambda
\]
is a quasi-isomorphism of dg $k$-algebras.
Moreover, the structure map $A\to C_\lambda$ is homotopic,
as an associative algebra morphism, to the composite
$A\xrightarrow{f}B\xrightarrow{i_\lambda}C_\lambda$.
Indeed, $A=k[t]=k\langle t\rangle$ is free as an associative
algebra on one degree-zero generator, and $d(h)=t$
supplies the required nullhomotopy of its image.
Thus, after choosing this homotopy, we obtain an equivalence
\[
\Phi_A(C_\lambda)\simeq(f:A\to B)
\]
in $\bAlg(D(k))_{A/}$.
Both $C_0$ and $C_1$ therefore define central structures
on the same morphism $f$.
These central structures are inequivalent. To see this,
apply derived extension of scalars along $A\to k$,
$t\mapsto0$. Each $C_\lambda$ is a bounded complex of
free $A$-modules, so
\[
k\otimes_A^{\mathbb L}C_\lambda
\simeq
C_\lambda/(t).
\]
These quotient dg-algebras have zero differential.
For $\lambda=0$, the quotient is graded-commutative.
For $\lambda=1$, it satisfies
\[
hx-xh=\epsilon\neq0,
\]
and is therefore not graded-commutative.
Their graded homology algebras are therefore
nonisomorphic and $C_0$ and $C_1$ cannot be equivalent in $\bAlg(\bMod_A)$.
\end{exa}

\begin{lemma}\label{lemma:center_evaluation}
Let $B\in\bAlg(\cC)$, and assume that the enriched bimodule
endomorphism object
\[
\underline{\mathcal Z}(B)
=
\underline{\Hom}_{{}_B\bMod_B}(B,B)
\]
exists. Regard it as an associative algebra by composition.
There is a canonical morphism of associative algebras
\[
e_B:\underline{\mathcal Z}(B)\longrightarrow B,
\]
whose underlying morphism is
\[
\underline{\mathcal Z}(B)
\simeq
\underline{\mathcal Z}(B)\otimes1_\cC
\xrightarrow{\id\otimes\eta_B}
\underline{\mathcal Z}(B)\otimes B
\xrightarrow{\operatorname{ev}}
B.
\]
We call $e_B$ evaluation at the unit of $B$.
\end{lemma}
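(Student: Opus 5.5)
The plan is to realize $e_B$ as the action of $\underline{\mathcal Z}(B)$ on the unit $B$, and to use that $B$ is the free left $B$-module on $1_\cC$.

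\textbf{Step 1: the action.} The object $\underline{\mathcal Z}(B)=\underline{\Hom}_{{}_B\bMod_B}(B,B)$ is an endomorphism object in a $\cC$-enriched (equivalently, left-tensored) $\infty$-category. By \cite[\S4.7.1]{HA} it is the terminal object among associative algebras $R$ equipped with a left action of $R$ on $B$ in ${}_B\bMod_B$. Thus $\underline{\mathcal Z}(B)$ carries a canonical associative algebra structure, and $B$ becomes a left $\underline{\mathcal Z}(B)$-module in ${}_B\bMod_B$ via $\operatorname{ev}:\underline{\mathcal Z}(B)\otimes B\to B$. This action is $B$-bilinear, and it commutes with the two $B$-actions.

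\textbf{Step 2: reduction to left modules.} Forget the bimodule structure down to left modules. By Proposition~\ref{prop:bimodules_forget_conservative}, this loses no essential information for identifying underlying maps. We then obtain an algebra map
\[
\underline{\mathcal Z}(B)\to\underline{\Hom}_{{}_B\bMod}(B,B).
\]
This is the functoriality of endomorphism algebras along the forgetful functor, which is $\cC$-linear.

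\textbf{Step 3: the right-hand side is $B^{\mathrm{op}}$.} Since $B$ is the free left $B$-module on $1_\cC$, restriction along $\eta_B$ gives an equivalence
\[
\underline{\Hom}_{{}_B\bMod}(B,B)\simeq\underline{\Hom}_\cC(1_\cC,B)\simeq B.
\]
This is the enriched form of \cite[Corollary~4.2.4.7]{HA}, already used in the proof of Lemma~\ref{lemma:lax_ff_alg}. Composition of endomorphisms of the left module $B$ corresponds to right multiplication, so as algebras
\[
\underline{\Hom}_{{}_B\bMod}(B,B)\simeq B^{\mathrm{op}}.
\]
Unwinding the free-module equivalence, the composite $\underline{\mathcal Z}(B)\to B^{\mathrm{op}}$ has underlying map $\operatorname{ev}\circ(\id\otimes\eta_B)$, as required.

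\textbf{Step 4 (main obstacle): from $B^{\mathrm{op}}$ to $B$.} We must either replace $B^{\mathrm{op}}$ by $B$ or avoid the opposite altogether. The cleanest route is to use the right-module forgetful functor instead. The endomorphism algebra of the right module $B_B$, with composition written in the convention matching Lemma~\ref{lemma:center_evaluation}, is $B$ itself: left multiplication $B\to\underline{\Hom}_{\bMod_B}(B,B)$ is an algebra map and an equivalence by freeness. Using this convention we get $e_B$ directly as an associative algebra map into $B$. The identification of its underlying map again comes from the unit-restriction equivalence.

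The genuinely delicate point is bookkeeping the order of composition versus the order of tensor factors in the symmetric monoidal $\cC$, so that the map is multiplicative rather than anti-multiplicative. I would settle this by checking on homotopy categories that $\operatorname{ev}(z\circ z')\eta=\operatorname{ev}(z)\,\operatorname{ev}(z')$ using bilinearity: $z(z'(1))=z(z'(1)\cdot 1)=z'(1)\,z(1)$, and symmetrically via the left action. This shows that both the left and right reductions agree up to the symmetry, and that the coherent structure comes from the universal property in Step~1.
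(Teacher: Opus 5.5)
Your Step~4 is the paper's proof: forget along ${}_B\bMod_B\to\bMod_B$, identify $\underline{\Hom}_{\bMod_B}(B,B)\simeq B$ via left multiplication, and get the coherent algebra structure from the universal property of endomorphism objects; the left-module detour through $B^{\mathrm{op}}$ in Steps~2--3 is unnecessary. One slip in your sanity check: $z(z'(1)\cdot 1)=z'(1)\,z(1)$ uses left linearity and gives the reversed order, whereas the multiplicative identity $(z\circ z')(1)=z(1)\,z'(1)$ comes from right linearity, $z(1\cdot z'(1))=z(1)\,z'(1)$.
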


\begin{proof}
We first recall the meaning of the evaluation morphism.
The enriched endomorphism object is characterized by natural
equivalences
\[
\Map_\cC\bigl(X,\underline{\mathcal Z}(B)\bigr)
\simeq
\Map_{{}_B\bMod_B}(X\otimes B,B),
\]
where $X\otimes B$ carries the bimodule structure induced
by the regular bimodule $B$.
Taking $X=\underline{\mathcal Z}(B)$, the identity morphism
of $\underline{\mathcal Z}(B)$ corresponds to a bimodule
morphism
\[
\operatorname{ev}:
\underline{\mathcal Z}(B)\otimes B\longrightarrow B.
\]
To prove that $e_B$ is an algebra morphism, we notice that we have a canonical morphism
\[
r:
\underline{\mathcal Z}(B)
\longrightarrow
R_B:=
\underline{\Hom}_{\bMod_B}(B,B).
\]
where the object $R_B$ is canonically equivalent to $B$ by left multiplication. It is straightforward to check that this is a morphism of algebras because composition of morphisms of bimodules and right modules are compatible.
\end{proof}

\begin{prop}\label{prop:central_algebra_definition}
Assume that $\cC$ is presentable symmetric monoidal, satisfies the hypothesis of Lemma \ref{lemma:center_evaluation} and that the tensor product commutes with small colimits in each variable. For $A\in\bC\bAlg(\cC)$ and $f:A\to B$, the space of central structures on $f$ is equivalent to the homotopy fiber at $f$ of
\[
\Map_{\operatorname{Alg}_{E_2}(\cC)}
       (A,\underline{\mathcal Z}(B))
\longrightarrow
\Map_{\bAlg(\cC)}(A,B),
\qquad g\longmapsto e_Bg.
\]
Here $A$ is regarded as an $E_2$-algebra by restriction of its commutative structure.
\end{prop}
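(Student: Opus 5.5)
The plan is to identify both sides with spaces of algebra maps into $B$ and then compare fibers. The starting point is Lurie's description of central algebras in \cite[Theorem~5.3.1.30]{HA}, together with the universal property of centralizers in \cite[Definition~5.3.1.2 and Theorem~5.3.1.14]{HA}.

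For an $E_1$-algebra $B$ in $\cC$, the center $\underline{\mathcal Z}(B)$ is the centralizer of $\id_B$; by Remark~\ref{rmk:center_E_2} it is an $E_2$-algebra. For an $E_2$-algebra $R$, this gives a natural equivalence between $E_2$-maps $R\to\underline{\mathcal Z}(B)$ and the following data: an $E_1$-structure on $B$ in ${}_R\bMod$, where ${}_R\bMod$ is monoidal because $R$ is $E_2$, compatible with the given $E_1$-structure on $B$. Equivalently, the data is a \emph{central action} of $R$ on $B$, that is, a pairing $R\otimes B\to B$ in $\bAlg(\cC)$ extending $\id_B$ (\cite[Definition~5.3.1.6]{HA}).

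Set $R=A$, regarded as an $E_2$-algebra by restriction. Since $A$ is in fact commutative, left $A$-modules with the $E_2$-induced monoidal structure agree with $(\bMod_A,\otimes_A)$ as monoidal categories; by Proposition~\ref{prop:algebras_bimodules} this is the relative tensor product.

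The proof would proceed in the following order.

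First, via \cite[Theorem~5.3.1.30]{HA} and Dunn additivity, identify $\Map_{\operatorname{Alg}_{E_2}(\cC)}(A,\underline{\mathcal Z}(B))$ with the space of pairs $(\widetilde B,\ \Phi_A^{\mathrm{und}}(\widetilde B)\simeq B)$. Here $\widetilde B\in\bAlg(\bMod_A)$, and the equivalence is taken in $\bAlg(\cC)$ after forgetting to the underlying algebra. This is the fiber over $B$ of the forgetful functor $\bAlg(\bMod_A)^{\simeq}\to\bAlg(\cC)^{\simeq}$.

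Second, check that under this identification the map $g\mapsto e_Bg$ corresponds to $\widetilde B\mapsto(\text{unit }A\to B)$. Concretely, the pairing $A\otimes B\to B$ restricted along $\id_A\otimes\eta_B$ is the unit map of $\widetilde B$. Lemma~\ref{lemma:center_evaluation} shows that $e_B$ is evaluation at $\eta_B$, which is exactly this restriction.

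Third, the space of central structures on $f$ is, by Definition~\ref{defn:central_algebra}, the fiber of $\Phi_A$ over $(A\xrightarrow{f}B)$ in the core of $\bAlg(\cC)_{A/}$. That core fibers over $\bAlg(\cC)^{\simeq}$ with fiber over $B$ equal to $\Map_{\bAlg(\cC)}(A,B)$. Taking fibers in two stages, first over $B$ and then over $f$, identifies the space of central structures with the homotopy fiber of the composite from the first two steps, which is the claim.

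The main obstacle is the second step: proving coherently that the unit map produced by Lurie's equivalence agrees with $e_B\circ g$ as a map of associative algebras, and not merely as a map in $\cC$. I would handle this by showing that both maps arise by restricting the universal pairing $\underline{\mathcal Z}(B)\otimes B\to B$ of $E_1$-algebras along the unit of the second factor. This uses the fact that $\eta_B:1\to B$ is an $E_1$-map, so the restriction is automatically a map of algebras. The identification $r$ of Lemma~\ref{lemma:center_evaluation} would then confirm that this restriction is $e_B$. A secondary point is the identification of $E_2$-induced left modules with $\bMod_A$ for commutative $A$, which follows from the symmetric monoidal structure on $\bMod_A$ restricting to its $E_2$ shadow.
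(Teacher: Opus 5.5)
Your proposal is correct and follows essentially the same route as the paper's proof: use the universal property of the $E_2$-center to classify $E_2$-maps $A\to\underline{\mathcal Z}(B)$ as associative algebra structures on $B$ in $(\bMod_A,\otimes_A)$ lifting the given one (via commutativity of $A$), identify $g\mapsto e_Bg$ with extracting the unit $A\to B$ by evaluating at $\eta_B$, and take the homotopy fiber over $f$. Your two-stage fiber computation through $\bAlg(\cC)^{\simeq}$ makes explicit what the paper leaves implicit. One small correction: your gloss ``equivalently, a pairing $R\otimes B\to B$ in $\bAlg(\cC)$ extending $\id_B$'' classifies only $E_1$-maps $R\to\underline{\mathcal Z}(B)$; for $E_2$-maps the pairing must in addition be a left $R$-module structure on $B$ in $\bAlg(\cC)$, which is what your first description (and the paper's operadic tensor product $\mathcal L\mathrm{Mod}^\otimes\otimes\mathrm{Assoc}^\otimes$) encodes and what your argument actually uses.
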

\begin{proof}
A central action consists of an $E_2$-algebra acting on an associative algebra by a compatible algebra structure on its module object. Equivalently, it is an algebra over the operadic tensor product of the associative and left-module operads. The universal property of the $E_2$-center classifies these coherent actions by $E_2$-maps into $\underline{\mathcal Z}(B)$; see \cite[\S5.3.1]{HA} and \cite[\S2.1]{LurieRotation}. For the fixed commutative algebra $A$, these are precisely associative algebra structures in $\bMod_A$. Evaluating an action on the unit of $B$ gives its underlying morphism $A\to B$. Taking the homotopy fiber of the classification over $f$ proves the statement.
\end{proof}

\begin{prop}\label{prop:Alg_ModA_central}
The functor $\Phi_A$ identifies $\bAlg(\bMod_A)$ with the $\infty$-category of $A$-algebras equipped with central structures. The multiplication of such an algebra is
\[
B\otimes_A B\longrightarrow B
\]
in $\bMod_A$, and its underlying multiplication in $\cC$ is obtained by precomposing with $B\otimes B\to B\otimes_A B$.
\end{prop}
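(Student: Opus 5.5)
The plan is to deduce the statement from the definitions and Proposition~\ref{prop:central_algebra_definition}, in three steps.

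\emph{Step 1: the identification.} By Definition~\ref{defn:central_algebra}, a central structure on $f:A\to B$ is a pair consisting of $\widetilde B\in\bAlg(\bMod_A)$ and an equivalence $\Phi_A(\widetilde B)\simeq(f:A\to B)$ in $\bAlg(\cC)_{A/}$. So I will define the $\infty$-category of $A$-algebras equipped with central structures as the essential image of $\Phi_A$, with mapping spaces computed in $\bAlg(\bMod_A)$. That is, I take the source of $\Phi_A$ itself as the model. The first claim then reduces to showing that the fiber of $\Phi_A$ over a point $f$ is the space of central structures on $f$. This is exactly the content of Proposition~\ref{prop:central_algebra_definition}: it identifies that space with the fiber of $\Map_{\operatorname{Alg}_{E_2}}(A,\underline{\mathcal Z}(B))\to\Map_{\bAlg(\cC)}(A,B)$ over $f$. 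As a result, $\Phi_A$ realizes $\bAlg(\bMod_A)$ as the total space of the family of central structures over $\bAlg(\cC)_{A/}$. I will stress that $\Phi_A$ is not claimed to be fully faithful; the preceding example with $C_0,C_1$ shows that it is not even injective on objects.

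\emph{Step 2: the multiplication in $\bMod_A$.} Since $A$ is commutative, $(\bMod_A,\otimes_A)$ is symmetric monoidal, and it embeds into $({}_A\bMod_A,\otimes_A)$ by regarding a module as a bimodule with the same action on both sides. This embedding is compatible with $\otimes_A$: both tensor products are geometric realizations of the same bar construction. It therefore induces $\bAlg(\bMod_A)\to\bAlg({}_A\bMod_A)$. Composing with the equivalence of Proposition~\ref{prop:algebras_bimodules} recovers $\Phi_A$. The proof of that proposition states that the algebra has unit $A\to B$ and multiplication $B\otimes_A B\to B$. Hence the multiplication of $\widetilde B$ is the map $B\otimes_A B\to B$ in $\bMod_A$.

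\emph{Step 3: the underlying multiplication in $\cC$.} The forgetful functor $\bMod_A\to\cC$ is lax monoidal. Its lax structure map $M\otimes N\to M\otimes_A N$ is the augmentation from the zeroth stage of the bar construction $|M\otimes A^{\otimes\bullet}\otimes N|$ to its colimit. Lax monoidal functors send algebras to algebras, and the induced multiplication is the composite of the lax structure map with the original multiplication. This gives $B\otimes B\to B\otimes_A B\to B$, as claimed.

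\emph{Main obstacle.} I expect the hardest point to be the compatibility of $\Phi_A$ with the composite $\bAlg(\bMod_A)\to\bAlg({}_A\bMod_A)\simeq\bAlg(\cC)_{A/}$ at the level of coherent structure, not only on unit and multiplication. I plan to handle it via naturality of \cite[Corollary~3.4.1.7]{HA} in the monoidal functor $\bMod_A\to{}_A\bMod_A$.
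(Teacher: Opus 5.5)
Your proposal is correct and is essentially the paper's argument: the paper treats the statement as the operadic description of algebras in $(\bMod_A,\otimes_A)$, with $\Phi_A$ given by forgetting, and your Steps~2--3 only make the multiplication explicit via the monoidal functor $\bMod_A\to{}_A\bMod_A$ and the lax monoidal forgetful functor $\bMod_A\to\cC$. Two small corrections, neither affecting the argument: the identification of the fiber of $\Phi_A$ over $f$ with the space of central structures on $f$ is literally Definition~\ref{defn:central_algebra}, so you do not need Proposition~\ref{prop:central_algebra_definition}, which also brings in presentability and the existence of $\underline{\mathcal Z}(B)$; and $\bMod_A\to{}_A\bMod_A$ is monoidal but in general not fully faithful (compare $\Map_{\bMod_A}(A,A)$ with $\mathcal Z(A)$), so it should not be called an embedding, though you only use its monoidality.
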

\begin{proof}
This is the operadic description of algebra objects in the symmetric monoidal category of $A$-modules. The unit is an $A$-linear map $A\to B$, and every operation is $A$-linear in the sense encoded by $\otimes_A$. Forgetting this structure gives $\Phi_A$. 
\end{proof}

We now return to assume that $\cC$ is a general monoidal $\infty$-category.
We write $B\ast_A C$ for the pushout of a span $B\leftarrow A\to C$ in $\bAlg(\cC)$ whenever it exists. For instance, the pushouts used below exist if $\cC$ is presentable and tensoring preserves small colimits separately \cite[Corollary~3.2.3.3]{HA}.

We define derivations as sections of split square-zero extensions. Throughout this discussion, $\cC$ is additive and its tensor product preserves finite coproducts separately.
For a $B$-bimodule $M$, the split square-zero algebra $B\oplus M$ has unit $(\eta_B,0)$. Its multiplication, on the ordered summands
\[
(B\oplus M)\otimes(B\oplus M)
\simeq (B\otimes B)\oplus(B\otimes M)
       \oplus(M\otimes B)\oplus(M\otimes M),
\]
is given respectively by the multiplication of $B$, the left $B$-action, the right $B$-action, and zero, with the first map landing in $B$ and the other three in $M$.
The algebra and bimodule coherences give the associative algebra structure. 

\begin{defn}\label{defn:derivations}
Let $B\in\bAlg(\cC)$ and $M\in{}_B\bMod_B$, and write
\[
p_M:B\oplus M\longrightarrow B
\]
for the augmentation of the split square-zero extension.
A \emph{derivation of $B$ into $M$} is a section of $p_M$
in $\bAlg(\cC)$. Its space is
\[
\operatorname{Der}(B,M)
=
\operatorname{fib}_{\id_B}
\left(
\Map_{\bAlg(\cC)}(B,B\oplus M)
\xrightarrow{p_M\circ-}
\Map_{\bAlg(\cC)}(B,B)
\right).
\]
Thus a derivation consists of an algebra morphism
$s:B\to B\oplus M$ together with an identification
$p_Ms\simeq\id_B$.

For an algebra morphism $f:A\to B$, an
\emph{$A$-relative derivation} is a section of $p_M$
in $\bAlg(\cC)_{A/}$. Its space is
\[
\operatorname{Der}_A(B,M)
=
\operatorname{fib}_{\id_B}
\left(
\Map_{\bAlg(\cC)_{A/}}(B,B\oplus M)
\xrightarrow{p_M\circ-}
\Map_{\bAlg(\cC)_{A/}}(B,B)
\right).
\]
The zero derivation is the canonical section
$z_B:B\to B\oplus M$.
\end{defn}

To relate this definition to the usual Leibniz rule, let
$\pi_M:B\oplus M\to M$ be the biproduct projection.
A section $s$ determines an underlying morphism
\[
d_s:=\pi_Ms:B\longrightarrow M.
\]
Projecting the multiplicativity and unit identities for $s$
onto $M$ gives
\[
d_s\mu_B
\simeq
\ell_M(\id_B\otimes d_s)
+
r_M(d_s\otimes\id_B),
\qquad
d_s\eta_B\simeq0,
\]
where
$\ell_M:B\otimes M\to M$ and
$r_M:M\otimes B\to M$ are the bimodule actions.
For a relative derivation, the identification
$sf\simeq z_Bf$ also supplies
\[
d_sf\simeq0.
\]

\begin{prop}[Addition of derivations]
\label{prop:derivations_square_zero}
The spaces $\operatorname{Der}(B,M)$ and
$\operatorname{Der}_A(B,M)$ carry canonical grouplike
$E_\infty$-structures, naturally in the coefficient
bimodule $M$. Their zero elements are the zero sections,
and their addition is induced by addition in $M$.
\end{prop}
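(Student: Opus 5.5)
The plan is to exhibit $\operatorname{Der}(B,M)$ and $\operatorname{Der}_A(B,M)$ as mapping spaces into a grouplike commutative monoid object of an $\infty$-category with finite products. Such mapping spaces automatically inherit grouplike $E_\infty$-structures, naturally in the target. The target will be the object $p_M:B\oplus M\to B$ of the overcategory $\bAlg(\cC)_{/B}$, or of $(\bAlg(\cC)_{A/})_{/B}$ in the relative case.

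First I will show that the functor
\[
{}_B\bMod_B\longrightarrow \bAlg(\cC)_{/B},\qquad M\longmapsto (B\oplus M\xrightarrow{p_M}B),
\]
preserves finite products. The zero module goes to $\id_B$, the terminal object. Products in $\bAlg(\cC)_{/B}$ are fiber products over $B$ computed in $\bAlg(\cC)$. The forgetful functor $\bAlg(\cC)\to\cC$ preserves limits, since limits of algebras are computed underlying (\cite[Corollary~3.2.2.5]{HA}). Using additivity of $\cC$, the underlying object of $(B\oplus M)\times_B(B\oplus N)$ is $B\oplus M\oplus N$. To see that the comparison map $B\oplus(M\oplus N)\to (B\oplus M)\times_B(B\oplus N)$ is an algebra equivalence, it suffices to check that it is an underlying equivalence, because the forgetful functor is conservative. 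For this it is cleanest to construct the square-zero functor functorially, for instance as $\bAlg$ applied to the lax monoidal functor ${}_B\bMod_B\to$ (augmented objects) coming from the $B$-bimodule structure, so that naturality and product preservation are built in.

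Second, every object of an additive $\infty$-category, and hence of ${}_B\bMod_B$, which is additive because biproducts and group structures on mapping spaces are computed underlying, is canonically a grouplike commutative monoid with respect to $\oplus$. A finite-product-preserving functor carries grouplike commutative monoids to commutative monoids, so $B\oplus M$ becomes a commutative monoid object of $\bAlg(\cC)_{/B}$. Its multiplication is induced by the fold map $M\oplus M\to M$, which is addition in $M$, and its unit is the zero section $z_B$. Grouplikeness is preserved because the shear map $(B\oplus M)\times_B(B\oplus M)\to (B\oplus M)\times_B(B\oplus M)$ is the image of the shear equivalence on $M\oplus M$, and functors preserve equivalences. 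Consequently, $\operatorname{Der}(B,M)\simeq \Map_{\bAlg(\cC)_{/B}}(\id_B,\,p_M)$ is a grouplike $E_\infty$-space, naturally in $M$, with zero element the zero section.

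For the relative version, the identification is
\[
\operatorname{Der}_A(B,M)\simeq\Map_{(\bAlg(\cC)_{A/})_{/B}}(f,\ A\to B\oplus M).
\]
The forgetful functor $\bAlg(\cC)_{A/}\to\bAlg(\cC)$ creates fiber products (connected limits), so $(\bAlg(\cC)_{A/})_{/B}\to\bAlg(\cC)_{/B}$ preserves finite products. The monoid structure therefore lifts, with the structure map $A\to B\oplus M$ taken to be $z_Bf$. The main obstacle is the first step: making the square-zero construction a coherent functor into algebras over $B$ and identifying the fiber products as algebras, not merely underlying objects. I would handle this by invoking conservativity together with the underlying biproduct computation, rather than by building the coherences by hand.
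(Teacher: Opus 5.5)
Your proposal is correct and takes essentially the same route as the paper. Both identify $(B\oplus M)\times_B(B\oplus N)\simeq B\oplus(M\oplus N)$, so that $p_M$ becomes a grouplike commutative monoid object of $\bAlg(\cC)_{/B}$ with addition induced by $M\oplus M\to M$ and unit $z_B$; both then apply $\Map_{\bAlg(\cC)_{/B}}(\id_B,-)$ and repeat the argument under $A$ with structure map $z_Bf$. Your version is more explicit than the paper's: you compute the limits underlying and use conservativity, transport the group structure along a finite-product-preserving functor, and check the shear map. The coherent functoriality of $M\mapsto B\oplus M$, which you single out as the main obstacle, is also taken for granted in the paper.
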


\begin{proof}
The split square-zero construction satisfies
\[
(B\oplus M)\times_B(B\oplus N)
\simeq
B\oplus(M\oplus N)
\]
as augmented algebras over $B$.
Consequently, the bimodule addition $M\oplus M\to M$
induces an algebra morphism
\[
(B\oplus M)\times_B(B\oplus M)
\simeq
B\oplus(M\oplus M)
\xrightarrow{\id_B\oplus+}
B\oplus M.
\]
Similarly, negation is induced by
$\id_B\oplus(-\id_M)$, and the identity element is $z_B$.
These operations make $p_M$ a commutative group object
in $\bAlg(\cC)_{/B}$, with the compatibilities inherited
from the additive structure of $M$.

Applying
$\Map_{\bAlg(\cC)_{/B}}(\id_B,-)$
gives the asserted structure on
$\operatorname{Der}(B,M)$.
All these operations preserve the structure map $z_Bf$,
so the same argument in the category of algebras under $A$
and over $B$ applies to $\operatorname{Der}_A(B,M)$.
\end{proof}

\section{Homotopical and homological epimorphisms}

Throughout this section, $\cC$ is monoidal, admits geometric realizations, and tensoring preserves them separately in each variable. We assume that the algebra pushouts under consideration exist. Additivity, preservation of finite coproducts and closedness are imposed only where stated.

\begin{defn} \label{defn:homotopy_epimorphism}
A morphism $f: A \to B$ in $\bAlg(\cC)$ is said to be
\begin{itemize}
\item[-] a \emph{homotopical epimorphism} if the canonical morphism $B \ast_A B \to B$ is an equivalence,
\item[-] a \emph{homological epimorphism} if the canonical morphism $B \otimes_A B \to B$ is an equivalence of $B$-bimodules.
\end{itemize}
\end{defn}

The first condition is equivalent to restriction
\[
\bAlg(\cC)_{B/}\longrightarrow\bAlg(\cC)_{A/}
\]
being fully faithful. Indeed, by the following lemma, restriction $\Map(B,S)\to\Map(A,S)$ is $(-1)$-truncated.
For specified $B$-algebras $R,S$, taking fibers therefore identifies the mapping spaces under $B$ and under $A$.
Conversely, if restriction between undercategories is fully faithful and an $A$-algebra $R$ admits a $B$-structure, then $\Map_{\bAlg(\cC)_{A/}}(B,R)$ is equivalent to the contractible space $\Map_{\bAlg(\cC)_{B/}}(B,R)$.

\begin{lemma}
\label{lemma:homotopical_epi_mapping_spaces}
Let $f:A\to B$ be an associative algebra morphism, and suppose
that the pushout $B\ast_A B$ exists. The following conditions
are equivalent:
\begin{enumerate}
\item The morphism $f$ is a homotopical epimorphism.
\item For every $R\in\bAlg(\cC)$, restriction induces a
$(-1)$-truncated map
\[
f^*:
\Map_{\bAlg(\cC)}(B,R)
\longrightarrow
\Map_{\bAlg(\cC)}(A,R).
\]
\item For every $A$-algebra $R$, the space
\[
\Map_{\bAlg(\cC)_{A/}}(B,R)
\]
is either empty or contractible.
\end{enumerate}
In particular, whenever a specified algebra morphism
$A\to R$ extends along $f$, its space of extensions is
contractible.
\end{lemma}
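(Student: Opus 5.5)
The plan is to reduce everything to the standard fact that a morphism $\alpha:X\to Y$ in an $\infty$-category is a monomorphism ($(-1)$-truncated) exactly when the diagonal $X\to X\times_Y X$ is an equivalence. Dually, $f:A\to B$ is an epimorphism in $\bAlg(\cC)$ exactly when the codiagonal $B\ast_A B\to B$ is an equivalence. We make this precise through mapping spaces. For every $R\in\bAlg(\cC)$, the universal property of the pushout gives a natural equivalence
\[
\Map_{\bAlg(\cC)}(B\ast_A B,R)\simeq \Map_{\bAlg(\cC)}(B,R)\times_{\Map_{\bAlg(\cC)}(A,R)}\Map_{\bAlg(\cC)}(B,R).
\]
Under this identification, precomposition with the fold map $\nabla:B\ast_A B\to B$ corresponds to the diagonal of the map of spaces $f^*:\Map(B,R)\to\Map(A,R)$. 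This holds because $\nabla$ is induced by the pair $(\id_B,\id_B)$ together with the canonical homotopy between the two composites from $A$.

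\textbf{(1)$\Leftrightarrow$(2).} By the Yoneda lemma, $\nabla$ is an equivalence iff $\nabla^*$ is an equivalence for every $R$. By the identification above, this holds iff the diagonal of $f^*$ is an equivalence for every $R$. For a map of spaces, that is exactly the statement that $f^*$ is $(-1)$-truncated, by the characterization of $(-1)$-truncated maps via diagonals (see \cite[Lemma~5.5.6.15]{HTT}).

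\textbf{(2)$\Leftrightarrow$(3).} Fix an $A$-algebra $g:A\to R$. Mapping spaces in the undercategory are fibers:
\[
\Map_{\bAlg(\cC)_{A/}}(B,R)\simeq \operatorname{fib}_g\bigl(f^*:\Map(B,R)\to\Map(A,R)\bigr),
\]
see \cite[Lemma~5.5.5.12]{HTT}. A map of spaces is $(-1)$-truncated iff all its homotopy fibers over all points are empty or contractible. Every point of $\Map(A,R)$ is some $g$, that is, some $A$-algebra structure on $R$, so (2) and (3) are equivalent. The final sentence of the statement is then just (3) in the case where the fiber is nonempty.

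The only step needing care is the identification of $\nabla^*$ with the diagonal of $f^*$, in a form that is coherent rather than merely on $\pi_0$. I would handle this by working in $\bAlg(\cC)_{A/}$, where $B\ast_A B$ is the coproduct $B\sqcup B$. There the fold map is the codiagonal, and $\Map_{A/}(B\sqcup B,R)\simeq\Map_{A/}(B,R)^2$ fiberwise over $\Map(A,R)$. The identification then follows fiber by fiber, using that equivalences of spaces over a base can be tested on fibers.
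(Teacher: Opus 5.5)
Your proposal is correct and follows essentially the same route as the paper: the pushout universal property identifies restriction along the codiagonal with the diagonal of $f^*$, the diagonal criterion for $(-1)$-truncated maps together with Yoneda gives (1)$\Leftrightarrow$(2), and the fiber description of mapping spaces in $\bAlg(\cC)_{A/}$ gives (2)$\Leftrightarrow$(3). Your extra step, which makes the diagonal identification coherent by viewing $B\ast_A B$ as a coproduct in $\bAlg(\cC)_{A/}$ and checking fiberwise, fills in a detail that the paper leaves implicit.
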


\begin{proof}
For every algebra $R$, the pushout property gives
\[
\Map_{\bAlg(\cC)}(B\ast_A B,R)
\simeq
\Map_{\bAlg(\cC)}(B,R)
\times_{\Map_{\bAlg(\cC)}(A,R)}
\Map_{\bAlg(\cC)}(B,R).
\]
Under this identification, restriction along the codiagonal
$B\ast_A B\to B$ is the diagonal of $f^*$.

A map of spaces is $(-1)$-truncated precisely when its
diagonal is an equivalence. Thus condition~(2) holds
precisely when restriction along the codiagonal induces
an equivalence on mapping spaces into every algebra $R$.
By the Yoneda lemma, this is equivalent to the codiagonal
itself being an equivalence, which is condition~(1).

Finally, for a specified structure map $\alpha:A\to R$,
there is a canonical equivalence
\[
\Map_{\bAlg(\cC)_{A/}}(B,R)
\simeq
\operatorname{fib}_{\alpha}(f^*).
\]
Consequently, conditions~(2) and~(3) are equivalent.
\end{proof}

\begin{prop} \label{prop:fully_faithful}
Let $f:A\to B$ be a morphism in $\bAlg(\cC)$. The following conditions are equivalent:
\begin{itemize}
\item[-] the restriction of scalars functor $\bMod_B \to \bMod_A$ is fully faithful;
\item[-] the restriction of scalars functor ${}_B\bMod \to {}_A\bMod$ is fully faithful;
\item[-] the restriction of scalars functor ${}_B\bMod_B \to {}_A\bMod_A$ is fully faithful.
\end{itemize}
Equivalently, each of these conditions holds if and only if $f$ is a homological epimorphism.
\end{prop}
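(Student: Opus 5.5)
The plan is to reduce each full-faithfulness condition to the invertibility of the counit of the extension/restriction adjunction. Then I compare these counits with the multiplication map $\mu\colon B\otimes_A B\to B$.

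Since $\cC$ admits geometric realizations and $\otimes$ preserves them, restriction of scalars $f^*\colon \bMod_B\to\bMod_A$ has a left adjoint $-\otimes_A B$, computed by the bar construction (\cite[\S4.6.2]{HA}). The same holds for left modules with $B\otimes_A-$, and for bimodules with $B\otimes_A-\otimes_A B$. A right adjoint $f^*$ is fully faithful if and only if the counit $f_!f^*N\to N$ is an equivalence for every $N$. For right modules the counit is
\[
N\otimes_A B\simeq N\otimes_B(B\otimes_A B)\longrightarrow N\otimes_B B\simeq N,
\]
which is $N\otimes_B\mu$ with $\mu$ viewed as a map of $B$-$B$-bimodules.

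\textbf{Homological epimorphism implies full faithfulness.} If $\mu$ is an equivalence of bimodules, then $N\otimes_B\mu$ is an equivalence for every $N$, so the right-module restriction is fully faithful. The left-module case is symmetric, using $\mu\otimes_B M$. For bimodules the counit is $B\otimes_A N\otimes_A B\to N$. I would factor it as $(\mu\otimes_B N\otimes_B B)$ followed by $(N\otimes_B\mu)$, after rewriting $B\otimes_A N\otimes_A B\simeq (B\otimes_AB)\otimes_B N\otimes_B(B\otimes_AB)$ using associativity of relative tensor products (\cite[\S4.4.3]{HA}). Both factors are then equivalences.

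\textbf{Full faithfulness implies homological epimorphism.} For right modules, take $N=B$, the regular right $B$-module. The counit is then exactly the underlying right-module map of $\mu\colon B\otimes_A B\to B$, so it is an equivalence of right $B$-modules. By Proposition~\ref{prop:bimodules_forget_conservative}, forgetting from ${}_B\bMod_B$ to $\bMod_B$ is conservative, so $\mu$ is an equivalence of bimodules. The left-module case is identical. For bimodules, take $N=B$: the counit $B\otimes_A B\otimes_A B\to B$ is an equivalence. It factors as
\[
B\otimes_AB\otimes_AB\xrightarrow{\mu\otimes_A B}B\otimes_AB\xrightarrow{\mu}B.
\]
Precomposing with the map $B\otimes_A B\to B\otimes_AB\otimes_AB$ induced by the unit $A\to B$ in the middle slot shows that $\mu$ has a right inverse up to homotopy. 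To get a two-sided inverse I would instead argue more directly. Full faithfulness on bimodules means $\Map_{{}_A\bMod_A}(f^*X,f^*Y)\simeq\Map_{{}_B\bMod_B}(X,Y)$. Apply this to $X=B$, $Y=B\otimes_AB$ and compare with the adjunction for the induction $B\otimes_A-\otimes_AB$; the Yoneda lemma then identifies $\mu$ as an equivalence.

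I expect this last step to be the main obstacle: deducing that $\mu$ itself is invertible from the invertibility of the two-sided counit. The fallback is to first show that the bimodule condition implies the one-sided one. Compose ${}_B\bMod_B\to\bMod_B$ with induction, or observe that the unit $M\to f^*(B\otimes_A M\otimes_A B)$ is an equivalence on restricted objects, and then conclude with the right-module argument and conservativity (Proposition~\ref{prop:bimodules_forget_conservative}).
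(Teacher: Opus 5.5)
Your setup via the counits of the three extension/restriction adjunctions matches the paper, as do your one-sided cases and your proof that a homological epimorphism gives full faithfulness. The gap is the bimodule case of the converse, which you leave open. After extracting a right inverse of $\mu$ from $\mu_3=\mu\circ(\mu\otimes_A\id_B)$, you abandon that line. Neither replacement is an argument as written. The Yoneda sketch specifies no map and checks no composite. The fallback does not explain how bimodule full faithfulness would give the one-sided statement without first passing through $\mu$.

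The missing observation is one line away from what you already have. Put $r=\mu\otimes_A\id_B$. Let $s\colon B\otimes_AB\simeq B\otimes_AA\otimes_AB\to B\otimes_AB\otimes_AB$ insert the unit in the middle slot. Unitality gives $rs\simeq\id$, hence $\mu_3s\simeq\mu rs\simeq\mu$. So $\mu$ is a retract of the equivalence $\mu_3$, and $r\mu_3^{-1}$ is a two-sided inverse:
\begin{itemize}
\item $\mu r\mu_3^{-1}\simeq\mu_3\mu_3^{-1}\simeq\id$;
\item $r\mu_3^{-1}\mu\simeq r\mu_3^{-1}\mu_3s\simeq rs\simeq\id$.
\end{itemize}
This is exactly the paper's retract argument. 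Note that precomposing with $s$ is what produces the \emph{left} inverse; the right inverse comes from the factorization alone, so you already had both halves.

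Your Yoneda route can also be completed. Lift the $A$-bimodule map $\sigma_1\colon B\simeq B\otimes_AA\to B\otimes_AB$ through full faithfulness to a $B$-bimodule map $\tilde\sigma$. Then $\mu\tilde\sigma\simeq\id$ by faithfulness, because $\mu\sigma_1\simeq\id$. Also $\tilde\sigma\mu\simeq\id$ by the adjunction: $B\otimes_AB$ is induced from $A$, and $\tilde\sigma\mu$ restricts along $f\otimes_Af\colon A\to B\otimes_AB$ to $\sigma_1 f\simeq f\otimes_Af$, just as the identity does. As written, however, the proposal contains neither version.
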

\begin{proof}
Restriction is right adjoint to extension of scalars
\[
-\otimes_A B,\qquad B\otimes_A-,\qquad
B\otimes_A(-)\otimes_A B,
\]
respectively. A right adjoint is fully faithful precisely when its counit is an equivalence. For one-sided modules the counit at $B$ is $\mu:B\otimes_A B\to B$, so full faithfulness implies that $\mu$ is an equivalence, also as a bimodule by conservativity.

For bimodules the counit at $B$ is instead the triple multiplication
\[
\mu_3:B\otimes_A B\otimes_A B\longrightarrow B.
\]
The arrow $\mu$ is a retract of $\mu_3$. Explicitly, set
\[
s(b_1\otimes b_2)=b_1\otimes1\otimes b_2,\qquad
r(b_1\otimes b_2\otimes b_3)=b_1b_2\otimes b_3.
\]
These denote coherent bimodule maps obtained from the unit and multiplication. They satisfy $rs\simeq\id$, $\mu_3s\simeq\mu$, and $\mu r\simeq\mu_3$. Thus an equivalence $\mu_3$ implies that $\mu$ is an equivalence.

Conversely, if $\mu$ is an equivalence, the one-sided counits are obtained by tensoring $\mu$ over $B$ with the given module. For $X\in{}_B\bMod_B$, the counit identifies with
\[
(B\otimes_A B)\otimes_B X\otimes_B(B\otimes_A B)
 \xrightarrow{\mu\otimes_B\id_X\otimes_B\mu}X.
\]
All these maps are equivalences. Therefore the three restriction functors are fully faithful.
\end{proof}

We will also use the following simple lemma.

\begin{lemma}
\label{lemma:restriction_tensor_equiv}
Let \(f:A\to B\) be a homological epimorphism in \(\bAlg(\cC)\). Then the restriction of scalars functor
\[
U_f:{}_B\bMod_B \to {}_A\bMod_A
\]
is lax monoidal and the canonical natural transformation
\[
U_f(M)\otimes_A U_f(N) \longrightarrow U_f(M\otimes_B N),
\]
is an equivalence.
\end{lemma}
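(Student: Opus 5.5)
We first construct the lax monoidal structure on $U_f$ and then reduce the equivalence to the homological epimorphism hypothesis through Proposition~\ref{prop:fully_faithful}.

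\emph{The lax structure.} The functor $U_f$ is right adjoint to the extension of scalars $F_f=B\otimes_A(-)\otimes_A B$. The functor $F_f$ is strongly monoidal for the relative tensor products. Indeed, there are natural equivalences
\[
F_f(X)\otimes_B F_f(Y)\simeq B\otimes_A X\otimes_A B\otimes_A Y\otimes_A B,
\]
which map to $F_f(X\otimes_A Y)$ through the multiplication $B\otimes_A B\to B$ in the middle, and $F_f(A)\simeq B\otimes_A B\to B$. By doctrinal adjunction (\cite[Corollary~7.3.2.7]{HA}), a right adjoint of a monoidal functor inherits a canonical lax monoidal structure. Its binary comparison map is the composite
\[
U_f(M)\otimes_A U_f(N)\longrightarrow U_f(M\otimes_B N)
\]
induced by the canonical map $M\otimes_A N\to M\otimes_B N$. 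The unit comparison is $f:A\to B$.

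Strictly speaking, $F_f$ need only be oplax monoidal for the adjunction argument, since its structure maps involve $\mu$. An alternative that avoids this issue is to view ${}_B\bMod_B$ and ${}_A\bMod_A$ as modules over the monoidal map $A\to B$ in the sense of \cite[\S4.4]{HA}. The comparison map is then the map of geometric realizations of bar constructions
\[
|M\otimes A^{\otimes\bullet}\otimes N|\longrightarrow|M\otimes B^{\otimes\bullet}\otimes N|
\]
induced by $f$.

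\emph{The equivalence.} We compute the source as
\[
M\otimes_A N\simeq M\otimes_B(B\otimes_A B)\otimes_B N .
\]
This uses the equivalence $M\simeq M\otimes_B B$, together with the associativity of relative tensor products for bimodules, which is available because $\otimes$ commutes with geometric realizations. Under this identification the comparison map becomes
\[
\id_M\otimes_B\mu\otimes_B\id_N:\ M\otimes_B(B\otimes_A B)\otimes_B N\longrightarrow M\otimes_B B\otimes_B N\simeq M\otimes_B N .
\]
Since $f$ is a homological epimorphism, $\mu$ is an equivalence of $B$-bimodules, so this map is an equivalence. Conservativity of the forgetful functors (Proposition~\ref{prop:bimodules_forget_conservative}) then shows that it is an equivalence of $A$-bimodules as well.

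The main obstacle is the coherence check that the lax structure constructed abstractly has binary component homotopic to the map $\id_M\otimes_B\mu\otimes_B\id_N$ above. To handle it, we would describe both maps on the simplicial bar resolutions, where each is induced by $f$ and multiplication levelwise. We would then use that geometric realization is compatible with these identifications.
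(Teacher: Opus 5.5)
Your proof is correct and follows the paper's argument. The lax structure is the canonical map from $\otimes_A$ to $\otimes_B$; under $M\otimes_A N\simeq M\otimes_B(B\otimes_A B)\otimes_B N$ its component becomes $\id_M\otimes_B\mu\otimes_B\id_N$, which is an equivalence because $\mu$ is (the paper simply asserts both the lax structure and this identification).

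Two corrections. First, discard the doctrinal-adjunction paragraph. $F_f$ is not strongly monoidal: in $F_f(X)\otimes_B F_f(Y)\simeq B\otimes_A X\otimes_A B\otimes_A Y\otimes_A B$ there is no $B\otimes_A B$ to multiply, and the natural comparison $F_f(X\otimes_A Y)\to F_f(X)\otimes_B F_f(Y)$ is induced by $f$ and goes in the oplax direction, so \cite[Corollary~7.3.2.7]{HA} does not apply. Second, the final appeal to conservativity is unnecessary, since restriction automatically preserves equivalences.
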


\begin{rmk}
The lax monoidal unit of $U_f$ is $A\to B$, which need not be an equivalence. This is exactly the situation covered by Lemma~\ref{lemma:lax_ff_alg}.
\end{rmk}

\begin{proof}[Proof of Lemma \ref{lemma:restriction_tensor_equiv}]
The lax monoidal structure is the canonical map from the relative tensor product over \(A\) to the relative tensor product over \(B\).
Assume now that \(\mu_B: B \otimes_A B \to B\) is an equivalence. For \(M,N\in {}_B\bMod_B\), using associativity of the relative tensor product we obtain
\[
U_f(M)\otimes_A U_f(N)
\simeq
M\otimes_B (B\otimes_A B)\otimes_B N.
\]
Under this identification, the lax structure is induced by
\[
\id_M\otimes_B \mu_B \otimes_B \id_N :
M\otimes_B (B\otimes_A B)\otimes_B N \to M\otimes_B B\otimes_B N \simeq M\otimes_B N.
\]
Since \(\mu_B\) is an equivalence, it follows that
\[
U_f(M)\otimes_A U_f(N) \longrightarrow U_f(M\otimes_B N)
\]
is an equivalence.
\end{proof}

We now fix some notation that will be used until the end of this section.
Let \(f:A\to B\) be a morphism in \(\bAlg(\cC)\), and let \(M\in {}_B\bMod_B\).
We denote by
\[
l_{M,B}: B\otimes M \to M,
\qquad
r_{M,B}: M\otimes B \to M
\]
the left and right \(B\)-actions on \(M\). By restriction of scalars along \(f\), we also obtain maps
\[
l_{M,A}: A\otimes M \to M,
\qquad
r_{M,A}: M\otimes A \to M.
\]
The left and right actions retain their specified input orders.

\begin{lemma}\label{lemma:relative_derivation_obstruction}
Assume that $\cC$ is additive and that tensoring preserves finite coproducts. If $f:A\to B$ is homotopical, then for every $B$-bimodule $M$ the space $\operatorname{Der}_A(B,M)$ is contractible. In particular, every relative derivation is homotopic to the zero section as an algebra map under $A$.
\end{lemma}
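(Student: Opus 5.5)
The plan is to apply Lemma~\ref{lemma:homotopical_epi_mapping_spaces}(3) directly. Fix a $B$-bimodule $M$ and regard $B\oplus M$ as an $A$-algebra via the composite
\[
A\xrightarrow{f}B\xrightarrow{z_B}B\oplus M .
\]
Here $z_B$ is the zero section. It is an algebra morphism because the split square-zero algebra structure, built using additivity of $\cC$ and the fact that tensoring preserves finite coproducts, makes both $z_B$ and $p_M$ algebra maps with $p_Mz_B\simeq\id_B$. With this $A$-algebra structure, $z_B$ becomes a point of $\Map_{\bAlg(\cC)_{A/}}(B,B\oplus M)$, so that space is nonempty. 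Since $f$ is a homotopical epimorphism, Lemma~\ref{lemma:homotopical_epi_mapping_spaces}(3) then says $\Map_{\bAlg(\cC)_{A/}}(B,B\oplus M)$ is contractible.

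Next I treat the target of the fiber in Definition~\ref{defn:derivations}. The space $\Map_{\bAlg(\cC)_{A/}}(B,B)$, with $B$ an $A$-algebra via $f$, contains $\id_B$, so it is also contractible by the same lemma. Now $\operatorname{Der}_A(B,M)$ is the fiber over $\id_B$ of the map
\[
p_M\circ-:\Map_{\bAlg(\cC)_{A/}}(B,B\oplus M)\longrightarrow\Map_{\bAlg(\cC)_{A/}}(B,B).
\]
This is a map between contractible spaces, so it is an equivalence, and its fiber over any point is contractible. One point needs checking: the structure map of $B\oplus M$ under $A$ must be compatible with $p_M$. This holds because $p_M z_B f\simeq f$, so postcomposition with $p_M$ really is a functor of undercategories landing at the $A$-algebra $(B,f)$.

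The final claim follows at once. The zero section $z_B$ lies in $\operatorname{Der}_A(B,M)$, with the tautological identification $p_Mz_B\simeq\id_B$. Any other relative derivation lies in the same contractible space, so it is homotopic to $z_B$ as an algebra map under $A$.

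I expect the main obstacle to be bookkeeping rather than substance. One must verify that $p_M$ and $z_B$ are genuine morphisms in $\bAlg(\cC)$, which relies on the construction of the split square-zero extension recalled before Definition~\ref{defn:derivations}. One must also confirm that the fiber in Definition~\ref{defn:derivations} is formed in undercategories whose structure maps agree up to coherent homotopy. The equivalence $\Map_{\bAlg(\cC)_{A/}}(B,R)\simeq\operatorname{fib}_\alpha(f^*)$ used in the proof of Lemma~\ref{lemma:homotopical_epi_mapping_spaces} handles the latter point.
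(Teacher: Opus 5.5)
Your proposal is correct and follows the paper's own argument: both $\Map_{\bAlg(\cC)_{A/}}(B,B\oplus M)$ and $\Map_{\bAlg(\cC)_{A/}}(B,B)$ are nonempty, containing $z_B$ and $\id_B$, hence contractible by Lemma~\ref{lemma:homotopical_epi_mapping_spaces}, so the fiber defining $\operatorname{Der}_A(B,M)$ is contractible. Your additional bookkeeping, namely the $A$-structure $z_Bf$ on $B\oplus M$ and the compatibility of $p_M$ with the undercategory structure, only makes explicit what the paper leaves implicit.
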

\begin{proof}
For each $R$ with a fixed $A$-structure, the space
$\Map_{\bAlg(\cC)_{A/}}(B,R)$ is a homotopy fiber of restriction of algebra maps. It is therefore empty or contractible by Lemma \ref{lemma:homotopical_epi_mapping_spaces}. For $R=B\oplus M$ and $R=B$ it is nonempty, containing $z_B$ and $\id_B$, respectively. Hence the fiber defining $\operatorname{Der}_A(B,M)$ is contractible.
\end{proof}

\begin{lemma}\label{lemma:canonical_derivation}
Now suppose also that $\cC$ is additive and that tensoring
preserves finite coproducts separately.
Let $f:A\to B$, and put
\[
M=B\otimes_A B,
\]
with its outer $B$-bimodule structure. Define
\[
\begin{aligned}
\sigma_1:\;
B&\simeq B\otimes_A A
\xrightarrow{\id_B\otimes_A f}B\otimes_A B,\\
\sigma_2:\;
B&\simeq A\otimes_A B
\xrightarrow{f\otimes_A\id_B}B\otimes_A B.
\end{aligned}
\]
There exists an $A$-relative derivation, represented by
an algebra section
\[
s:B\longrightarrow B\oplus M,
\]
whose underlying $A$-bimodule morphism is
\[
\delta:=\pi_Ms
\simeq\sigma_1-\sigma_2:B\longrightarrow M.
\]

If $\delta$ is nullhomotopic in
$\Map_{{}_A\bMod_A}(B,M)$, then the multiplication
\[
\mu_{B/A}:B\otimes_A B\longrightarrow B
\]
is an equivalence.
In particular, this holds if $s$ is homotopic to the
zero section as an algebra morphism under $A$.
\end{lemma}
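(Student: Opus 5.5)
We need two things: construct the derivation $s$, and show that if $\delta\simeq 0$ then $\mu_{B/A}$ is an equivalence.

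\emph{Construction of $s$.} This is Silver's trick. We use the split square-zero extension $B\oplus M$ with $M=B\otimes_A B$, and build two algebra maps $B\to B\oplus M$ under $A$ whose difference is the desired derivation. The algebra $B\otimes_A B$ is generally not an algebra, so we instead work in the bimodule picture. By Proposition~\ref{prop:algebras_bimodules}, algebra maps out of $B$ under $A$ correspond to algebras in ${}_A\bMod_A$. We will show that $\sigma_1$ and $\sigma_2$ each lift to algebra maps $t_i=(\id_B,\sigma_i):B\to B\oplus M$ over $B$. Take $\sigma_1$ as an example. Its Leibniz identity $\sigma_1\mu_B\simeq \ell_M(\id\otimes\sigma_1)$ holds because $\sigma_1(b)=b\otimes 1$, and there is no right term. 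This is not a derivation, so $t_1$ as written is not multiplicative. The correct approach is to take the difference. By Proposition~\ref{prop:derivations_square_zero}, the difference $\delta=\sigma_1-\sigma_2$ satisfies $\delta(bb')=b\delta(b')+\delta(b)b'$, which is the classical telescoping identity $bb'\otimes1-1\otimes bb' = b(b'\otimes1-1\otimes b')+(b\otimes1-1\otimes b)b'$ made coherent.

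To make this coherent without element chasing, I would realize $B\oplus M$ as follows. Consider the $A$-algebra $E=B\otimes_A B$-endomorphisms. More concretely, consider the bimodule $N=B\oplus M$ with the triangular algebra structure on $\begin{pmatrix} B & M\\ 0& B\end{pmatrix}$ (Morita context). The two maps $b\mapsto \mathrm{diag}(b,b)+(\sigma_1-\sigma_2)(b)$ are then given by conjugating the diagonal embedding by the unipotent element $\begin{pmatrix}1&1\otimes1\\0&1\end{pmatrix}$, which is a coherent algebra automorphism. Identifying the triangular algebra modulo the appropriate square-zero piece with $B\oplus M$ produces $s$. Its restriction to $A$ agrees with $z_Bf$ because $\sigma_1f\simeq\sigma_2 f$ canonically through the relative tensor product over $A$. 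This coherent construction is the main obstacle. I would first try to define the conjugation as a map in ${}_A\bMod_A$-algebras using additivity, so that subtraction is available.

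\emph{Consequence.} Suppose $\delta\simeq0$ as an $A$-bimodule map, i.e. $\sigma_1\simeq\sigma_2$. Then $\mu_{B/A}\sigma_1\simeq\id_B$, so $\mu$ has a section, and it remains to show $\sigma_1\mu_{B/A}\simeq\id_M$. Write $\id_M$ via the bar presentation. The map $\sigma_1\mu_{B/A}$ is $b\otimes b'\mapsto bb'\otimes1=b\,\sigma_1(b')$, i.e. $\ell_M(\id_B\otimes_A\sigma_1)$. Replacing $\sigma_1$ by $\sigma_2$ via the homotopy $\delta\simeq0$, which is $A$-bilinear and so passes through $\otimes_A$, gives $\ell_M(\id\otimes_A\sigma_2)$, i.e. $b\otimes b'\mapsto b\otimes b'$, which is the identity. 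Hence $\mu_{B/A}$ is an equivalence, as a bimodule map by Proposition~\ref{prop:bimodules_forget_conservative}. Finally, if $s$ is homotopic to $z_B$ under $A$, then applying $\pi_M$ gives $\delta\simeq0$ as an $A$-bimodule map, since $s$ and $z_B$ are $A$-algebra maps and hence $A$-bilinear.
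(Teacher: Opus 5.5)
Your second half is the paper's argument. You get $\mu_{B/A}\sigma_1\simeq\id_B$ by unitality, and $\sigma_1\mu_{B/A}\simeq(\mu_{B/A}\otimes_A\id_B)(\id_B\otimes_A\sigma_1)\simeq(\mu_{B/A}\otimes_A\id_B)(\id_B\otimes_A\sigma_2)\simeq\id_M$ via the $A$-bimodule homotopy $\sigma_1\simeq\sigma_2$, then conservativity. The final assertion is handled the same way. Your idea for $s$, conjugation by a unipotent unit in a square-zero extension, is also the paper's.

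\textbf{The gap.} You never establish that $s$ exists as a coherent algebra section, which is the substantive part of the lemma. You assert that conjugation is ``a coherent algebra automorphism'' and then concede this is the main obstacle. Additivity does not resolve it. Work in $\cD=({}_A\bMod_A,\otimes_A)$ with $S=B\oplus M$, and let $v\colon A\to S$ be the image of $u=f\otimes_A f$. Additivity, together with bilinearity of $\otimes_A$, only lets you form $t=\eta_S-v$ and $t^{-1}=\eta_S+v$. These are mutually inverse in the multiplicative monoid $\Map_\cD(A,S)$ because the multiplication of $S$ vanishes on $M\otimes_A M$. What is missing is a mechanism that promotes this unit to an algebra automorphism of $S$ with all its higher coherences.

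The paper supplies this through the endomorphism algebra of the regular right $S$-module in $\cD$:
\begin{itemize}
\item the free-module adjunction $\Map_{\bMod_S(\cD)}(X\otimes_A S,S)\simeq\Map_\cD(X,S)$ identifies that endomorphism algebra with $S$ (see \cite[\S4.7.1]{HA});
\item left multiplication by $t$ is an automorphism of this module;
\item transporting endomorphisms along it produces $c_t$, with its coherences.
\end{itemize}
Then $s=c_t\circ z_B$ satisfies $ps\simeq\id_B$ because $p(t)\simeq\eta_B$. The $A$-relative structure comes for free, because $s$ is a morphism in $\bAlg(\cD)\simeq\bAlg(\cC)_{A/}$ (Proposition~\ref{prop:algebras_bimodules}). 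The pointwise homotopy $\sigma_1f\simeq\sigma_2f$ you invoke would not by itself give a morphism under $A$.

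\textbf{Two smaller corrections.} First, the triangular algebra is a detour. Your unipotent matrix has equal diagonal entries, so it already lies in the equal-diagonal subalgebra, and that subalgebra is $B\oplus M$. It is the pullback of the triangular algebra along the diagonal $B\to B\times B$, not a quotient ``modulo'' a square-zero piece. Your construction therefore reduces to conjugating $z_B$ inside $B\oplus M$ itself, as in the paper, and you avoid having to build the triangular algebra coherently. Second, Proposition~\ref{prop:derivations_square_zero} cannot give the Leibniz rule for $\sigma_1-\sigma_2$: it adds derivations, and neither $\sigma_1$ nor $\sigma_2$ is a derivation. You do not rely on this in the end, but it should be dropped.
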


\begin{proof}
Work in the additive monoidal category
\[
\cD=({}_A\bMod_A,\otimes_A),
\]
whose monoidal unit is $A$. Tensoring in $\cD$ preserves finite coproducts separately,
as follows from its bar construction.
By Proposition~\ref{prop:algebras_bimodules}, the morphism
$f$ determines an associative algebra $B$ in $\cD$.
The object $M$ is a $B$-bimodule in $\cD$, and there is
a canonical morphism
\[
u:A\simeq A\otimes_A A
\xrightarrow{f\otimes_A f}M.
\]
Writing $\ell_M$ and $r_M$ for its left and right
$B$-actions, respectively, we have
\[
\sigma_1 \simeq \ell_M(\id_B\otimes_A u),
\qquad
\sigma_2 \simeq r_M(u\otimes_A\id_B).
\]

Form the split square-zero algebra
\[
S=B\oplus M
\]
in $\cD$, with augmentation $p:S\to B$ and zero
section $z_B:B\to S$.
Let $v:A\to S$ be the composite of $u$ with the inclusion
of $M$ into $S$.
Since $M$ is square-zero, the elements
\[
t=\eta_S-v,
\qquad
t^{-1}=\eta_S+v
\]
of the multiplicative monoid $\Map_\cD(A,S)$
are mutually inverse. Conjugation $c_t(a) = t a t^{-1}$ therefore gives an
algebra automorphism
\[
c_t:S\longrightarrow S.
\]
Indeed, the free-right-module adjunction gives
\[
\Map_{\bMod_S(\cD)}(X\otimes_A S,S)
\simeq
\Map_\cD(X,S).
\]
Thus the enriched endomorphism object of the regular
right $S$-module is represented by $S$, with composition
corresponding to multiplication.
Left multiplication by $t$ is an automorphism of that
module. Transporting endomorphisms along this automorphism
gives $c_t$, including its algebra coherences.
See the endomorphism-algebra construction in
\cite[\S4.7.1]{HA}.

Set
\[
s:=c_t\circ z_B:B\longrightarrow S.
\]
Since $p(t)=\eta_B$, conjugation becomes the identity
after applying $p$. Hence
\[
ps\simeq\id_B.
\]
The morphism $s$ is therefore an algebra section in $\cD$.
Under the equivalence of
Proposition~\ref{prop:algebras_bimodules}, it determines
an $A$-relative derivation in $\cC$.

Expanding the conjugation using the square-zero
multiplication gives
\[
\pi_Ms
\simeq
\ell_M(\id_B\otimes_A u)
-
r_M(u\otimes_A\id_B)
\simeq
\sigma_1-\sigma_2.
\]
This identifies its underlying morphism with $\delta$.

Now suppose that $\delta\simeq0$ in $\cD$.
Additivity gives $\sigma_1\simeq\sigma_2$ there.
Writing $\mu=\mu_{B/A}$, unitality gives
\[
\mu\sigma_1\simeq\id_B.
\]
Furthermore, associativity and the homotopy
$\sigma_1\simeq\sigma_2$ give
\[
\begin{aligned}
\sigma_1\mu
&\simeq
(\mu\otimes_A\id_B)
(\id_B\otimes_A\sigma_1)\\
&\simeq
(\mu\otimes_A\id_B)
(\id_B\otimes_A\sigma_2)\\
&\simeq
\id_{B\otimes_A B}.
\end{aligned}
\]
These are homotopies of $A$-bimodule morphisms.
Consequently, $\sigma_1$ is an inverse of $\mu$ in $\cD$.
Conservativity of the forgetful functors then implies
that $\mu$ is an equivalence of $B$-bimodules.

Finally, a homotopy $s\simeq z_B$ in
$\bAlg(\cC)_{A/}$ induces a homotopy of the corresponding
algebra morphisms in $\cD$. Forgetting their algebra
structures and composing with $\pi_M$ gives
\[
\delta=\pi_Ms\simeq\pi_Mz_B=0
\]
in $\Map_{{}_A\bMod_A}(B,M)$, proving the last assertion.
\end{proof}

The following is our main result.

\begin{thm}\label{thm:main}
Let $\cC$ be a monoidal $\infty$-category admitting geometric realizations, and suppose that tensoring preserves geometric realizations separately in each variable. Suppose also that the algebra pushout $B\ast_A B$ exists for the morphism $f:A\to B$ under consideration.
\begin{enumerate}
\item Every homological epimorphism $f$ is homotopical.
\item If $\cC$ is additive and tensoring preserves finite coproducts separately in each variable, then $f$ is homotopical if and only if it is homological.
\end{enumerate}
\end{thm}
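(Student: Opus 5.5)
For part (1) I would work through mapping spaces and use Lemma~\ref{lemma:homotopical_epi_mapping_spaces}. The goal is to show that, for every $R\in\bAlg(\cC)$, the restriction $f^*:\Map_{\bAlg(\cC)}(B,R)\to\Map_{\bAlg(\cC)}(A,R)$ is $(-1)$-truncated. Equivalently, for each $A$-algebra $\alpha:A\to R$, the space $\Map_{\bAlg(\cC)_{A/}}(B,R)$ should be empty or contractible.

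The route goes through Proposition~\ref{prop:algebras_bimodules}. It identifies $\bAlg(\cC)_{A/}$ with $\bAlg({}_A\bMod_A)$ and $\bAlg(\cC)_{B/}$ with $\bAlg({}_B\bMod_B)$. Under these identifications, restriction along $f$ should correspond to the functor on algebras induced by the lax monoidal restriction $U_f:{}_B\bMod_B\to{}_A\bMod_A$; I would check this compatibility directly from the construction. By Proposition~\ref{prop:fully_faithful}, $U_f$ is fully faithful because $f$ is homological. By Lemma~\ref{lemma:restriction_tensor_equiv}, its lax structure maps on binary tensors are equivalences. Lemma~\ref{lemma:lax_ff_alg} then shows that $\bAlg(\cC)_{B/}\to\bAlg(\cC)_{A/}$ is fully faithful.

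It remains to pass from this full faithfulness to the mapping-space criterion. Let $R$ be an $A$-algebra. Suppose $\Map_{\bAlg(\cC)_{A/}}(B,R)$ is nonempty, witnessed by $g:B\to R$ under $A$. Then $g$ makes $R$ into a $B$-algebra $R_g$. Full faithfulness gives
\[\Map_{\bAlg(\cC)_{A/}}(B,R)\simeq\Map_{\bAlg(\cC)_{B/}}(B,R_g),\]
and the right-hand space is contractible because $B$ is initial in $\bAlg(\cC)_{B/}$. This is the converse direction already sketched in the paragraph after Definition~\ref{defn:homotopy_epimorphism}, so Lemma~\ref{lemma:homotopical_epi_mapping_spaces} yields that $f$ is homotopical.

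For part (2), (1) already gives one implication, so assume $f$ is homotopical, with $\cC$ additive. Lemma~\ref{lemma:canonical_derivation} produces an $A$-relative derivation $s:B\to B\oplus(B\otimes_A B)$. Lemma~\ref{lemma:relative_derivation_obstruction} shows that $\operatorname{Der}_A(B,B\otimes_A B)$ is contractible, so $s$ is homotopic to the zero section $z_B$ as an algebra map under $A$. The final clause of Lemma~\ref{lemma:canonical_derivation} then says that $\mu_{B/A}$ is an equivalence of $B$-bimodules, so $f$ is homological.

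I expect the main obstacle to be the identification, in part (1), of restriction between undercategories with the functor induced by $U_f$ on algebras. To settle it I would show that both are right adjoints whose left adjoints are $-\ast_A B$ and $B\otimes_A(-)\otimes_A B$ followed by the algebra structure. Alternatively, I would note that the equivalence of Proposition~\ref{prop:algebras_bimodules} is natural in the base algebra, being built from \cite[Corollary~3.4.1.7]{HA}.
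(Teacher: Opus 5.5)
Your proposal is correct and follows the paper's proof essentially step for step. Part (1) goes through Propositions~\ref{prop:fully_faithful} and~\ref{prop:algebras_bimodules} with Lemmas~\ref{lemma:restriction_tensor_equiv} and~\ref{lemma:lax_ff_alg}, and part (2) combines Lemma~\ref{lemma:relative_derivation_obstruction} with the derivation of Lemma~\ref{lemma:canonical_derivation}.

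The paper takes the compatibility of $U_f$ with restriction between undercategories as part of Proposition~\ref{prop:algebras_bimodules}. Of your two justifications, naturality of that equivalence in the base algebra is the sound one. The adjoint route fails for two reasons: it needs pushouts $R\ast_A B$ that are not assumed to exist, and the left adjoint on algebras is that free product rather than $B\otimes_A(-)\otimes_A B$.
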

\begin{proof}
If $f$ is homological, restriction
\[
U_f:{}_B\bMod_B\longrightarrow{}_A\bMod_A
\]
is fully faithful by Proposition~\ref{prop:fully_faithful}. By Lemma~\ref{lemma:restriction_tensor_equiv} it satisfies the hypothesis of Lemma~\ref{lemma:lax_ff_alg} and therefore it gives a fully faithful functor on associative algebras. Proposition~\ref{prop:algebras_bimodules} identifies this functor with
\[
\bAlg(\cC)_{B/}\longrightarrow\bAlg(\cC)_{A/},
\]
hence $f$ is homotopical.

For the converse, suppose that $\cC$ is additive, $\otimes$ is additive and assume that $f$ is homotopical. Apply Lemma~\ref{lemma:relative_derivation_obstruction}
to the relative derivation constructed in Lemma~\ref{lemma:canonical_derivation}.
The corresponding section
\[
s:B\longrightarrow B\oplus(B\otimes_A B)
\]
is homotopic to the zero section $z_B$ in $\bAlg(\cC)_{A/}$.
The forgetful functor from algebras under $A$ to
$A$-bimodules sends the homotopy $s\simeq z_B$ to a
homotopy of their underlying $A$-bimodule morphisms.
Composing with the $A$-bimodule projection
\[
\pi_M:B\oplus M\longrightarrow M,
\qquad M=B\otimes_A B,
\]
therefore gives
\[
\delta=\pi_Ms\simeq\pi_Mz_B=0
\quad\text{in }\Map_{{}_A\bMod_A}(B,M).
\]
Lemma~\ref{lemma:canonical_derivation} now implies that relative multiplication is an equivalence.
Thus $f$ is homological.
\end{proof}

The hypothesis of additivity on $\cC$ for the implication that homotopical epimorphism implies homological epimorphism is necessary. Remark~\ref{rmk:additivity_necessary} gives a counterexample in the $\infty$-category of spaces. Appendix~\ref{app:closed_proof} gives another proof of Theorem \ref{thm:main} when $\cC$ is additive and the functors $-\otimes X$ admit right adjoints.

We conclude this section with a few consequences of the main theorem.

\begin{cor}\label{cor:map_on_centers}
A homological epimorphism $f:A\to B$ induces a morphism of composition monoids
\[
\mathcal Z(A)\longrightarrow\mathcal Z(B).
\]
In particular, this holds for homotopical epimorphisms under the hypotheses of Theorem~\ref{thm:main}(2).
\end{cor}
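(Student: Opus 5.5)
We construct the map through the restriction functor. Recall $\mathcal Z(A)=\Map_{{}_A\bMod_A}(A,A)$ and $\mathcal Z(B)=\Map_{{}_B\bMod_B}(B,B)$, both monoids under composition. Since $f$ is a homological epimorphism, Proposition~\ref{prop:fully_faithful} shows that restriction $U_f:{}_B\bMod_B\to{}_A\bMod_A$ is fully faithful. Its left adjoint is extension of scalars $F=B\otimes_A(-)\otimes_A B$. We define the map as the composite of two steps:
\[
\mathcal Z(A)=\Map_{{}_A\bMod_A}(A,A)\xrightarrow{\;F\;}\Map_{{}_B\bMod_B}(F(A),F(A)),
\qquad
F(A)\simeq B\otimes_A A\otimes_A B\simeq B\otimes_A B\simeq B,
\]
where the last equivalence is the bimodule equivalence $\mu:B\otimes_A B\to B$ supplied by the hypothesis. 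The first arrow is a morphism of composition monoids, because $F$ is a functor and therefore induces $\mathbb{A}_\infty$-maps on endomorphism spaces. The second step, conjugation by the fixed equivalence $F(A)\simeq B$, is also a monoid equivalence. The composite lands in $\Map_{{}_B\bMod_B}(B,B)=\mathcal Z(B)$.

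It is worth checking the equivalent description through $U_f$. By full faithfulness, $\mathcal Z(B)\simeq\Map_{{}_A\bMod_A}(B,B)$ as monoids. An element $z\in\mathcal Z(A)$ can be sent to $\id_B\otimes_A z\otimes_A \id_B$, transported along $B\otimes_A A\otimes_A B\simeq B$. This agrees with the construction above.

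The only nontrivial point is that $F(A)$ really is $B$ as a bimodule. The identification $B\otimes_A A\otimes_A B\simeq B\otimes_A B$ uses the unit constraints of the relative tensor product. The further identification with $B$ is exactly the homological epimorphism hypothesis, and Proposition~\ref{prop:bimodules_forget_conservative} guarantees that it holds as $B$-bimodules.

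The second sentence of the corollary then follows immediately from Theorem~\ref{thm:main}(2). Under those hypotheses a homotopical epimorphism is homological, so the construction applies.

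I expect the main obstacle to be coherence: one must ensure that the induced map respects the monoid structure up to coherent homotopy, not merely on $\pi_0$. This is handled by the general fact that a functor induces $\mathbb{A}_\infty$-maps on endomorphism spaces, and that conjugation by an equivalence is a monoid equivalence. No further input is needed.
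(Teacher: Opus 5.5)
Your proposal is correct and is essentially the paper's own argument: the paper likewise applies the extension functor $L_f(M)=B\otimes_A M\otimes_A B$ to endomorphisms of the unit bimodule $A$, identifies $L_f(A)\simeq B\otimes_A B\simeq B$ via the homological epimorphism hypothesis, and obtains the homotopical case from Theorem~\ref{thm:main}(2). Your appeals to the full faithfulness of $U_f$ and to Proposition~\ref{prop:bimodules_forget_conservative} are harmless but unnecessary, since the definition already requires $\mu$ to be an equivalence of $B$-bimodules and the construction never uses $U_f$.
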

\begin{proof}
The functor $L_f(M)=B\otimes_A M\otimes_A B$ from $A$-bimodules to $B$-bimodules sends $A$ to $B\otimes_A B\simeq B$. Applying it to endomorphisms of $A$ gives the map, preserving coherent composition.
\end{proof}

For the following enriched statement, assume that $\cC$
is presentable symmetric monoidal and that its tensor
product preserves small colimits separately in each
variable. These hypotheses ensure the existence of the
center objects with their canonical $E_2$-structures.

\begin{cor}\label{cor:map_on_centers_enriched}
Assume that $\cC$ is presentable symmetric monoidal and that tensoring preserves small colimits separately. A homological epimorphism $f:A\to B$ induces a morphism of $E_2$-algebras
\[
\underline{\mathcal Z}(A)\longrightarrow\underline{\mathcal Z}(B).
\]
The induced map on center spaces, obtained by applying
$\Map_\cC(1_\cC,-)$, agrees with the map constructed in
Corollary~\ref{cor:map_on_centers}. Thus the same conclusion holds for homotopical epimorphisms when $\cC$ is additive.
\end{cor}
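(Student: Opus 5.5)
The approach is to realize the map of centers as a map of endomorphism objects of monoidal units, induced by a monoidal functor between bimodule categories. By Remark~\ref{rmk:center_E_2}, $\underline{\mathcal Z}(A)$ is the $E_2$-algebra of endomorphisms of the unit $A$ in the monoidal $\infty$-category $({}_A\bMod_A,\otimes_A)$, and likewise for $B$. The extension functor
\[
L_f={}B\otimes_A(-)\otimes_A B:{}_A\bMod_A\longrightarrow{}_B\bMod_B
\]
is left adjoint to $U_f$. By Lemma~\ref{lemma:restriction_tensor_equiv}, $U_f$ is lax monoidal, so $L_f$ is canonically oplax monoidal. The oplax structure maps are
\[
L_f(M\otimes_A N)\to L_f(M)\otimes_B L_f(N),
\qquad
L_f(A)\simeq B\otimes_A B\to B.
\]
The first is induced by the unit $A\to B$, i.e.\ by $M\otimes_A A\otimes_A N\to M\otimes_A B\otimes_A N$. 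The second is the multiplication $\mu$.

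First, I will show that $L_f$ is strongly monoidal when $f$ is homological. The unit map is $\mu$, which is an equivalence by hypothesis. For the binary map, write the target as
\[
B\otimes_A M\otimes_A (B\otimes_A B)\otimes_A N\otimes_A B.
\]
The comparison map is $\id\otimes\sigma\otimes\id$, where $\sigma$ is one of the maps $\sigma_1,\sigma_2$ of Lemma~\ref{lemma:canonical_derivation}. Since $\mu\sigma_1\simeq\id$ and $\mu$ is an equivalence, $\sigma_1$ is an equivalence, and the same argument applies to $\sigma_2$. So $L_f$ upgrades to a monoidal functor. A cleaner route is to note that, since $U_f$ is fully faithful with tensor comparison maps that are equivalences, the adjunction $L_f\dashv U_f$ exhibits ${}_B\bMod_B$ as a monoidal localization of ${}_A\bMod_A$, and then to cite \cite[Proposition~2.2.1.9]{HA}.

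Second, $L_f$ preserves colimits, since tensoring does. In the presentable setting, a colimit-preserving monoidal functor is $\cC$-linear: it commutes with the left tensoring $X\otimes(-)$, because $L_f(X\otimes M)\simeq X\otimes L_f(M)$ by associativity of tensor products with $\cC$ symmetric monoidal. A $\cC$-linear monoidal functor induces a map of enriched endomorphism objects of the units, compatible with the $E_2$-structures from \cite[Theorem~5.3.1.30]{HA}. Functoriality of the endomorphism construction for monoidal $\cC$-linear functors, via Dunn additivity \cite[Theorem~5.1.2.2]{HA}, then gives an $E_2$-map $\underline{\mathcal Z}(A)\to\underline{\mathcal Z}(B)$. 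Concretely, this map is adjoint to
\[
\underline{\mathcal Z}(A)\otimes B\simeq L_f(\underline{\mathcal Z}(A)\otimes A)\xrightarrow{L_f(\mathrm{ev})}L_f(A)\simeq B.
\]

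Third, I will compare with Corollary~\ref{cor:map_on_centers}. Applying $\Map_\cC(1_\cC,-)$ to the enriched map and using the defining equivalence
\[
\Map_\cC(X,\underline{\mathcal Z}(B))\simeq\Map_{{}_B\bMod_B}(X\otimes B,B)
\]
with $X=1_\cC$ recovers exactly the map $\phi\mapsto L_f(\phi)$, transported along $L_f(A)\simeq B$. That is the construction of Corollary~\ref{cor:map_on_centers}. The final sentence of the statement then follows from Theorem~\ref{thm:main}(2).

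The main obstacle is the second step: making coherent the passage from a $\cC$-linear monoidal functor to an $E_2$-map of centers. I would try to phrase it through the universal property of the center as a terminal object acting on the unit, as in \cite[\S5.3.1]{HA}. The $\underline{\mathcal Z}(A)$-action on $A$ in ${}_A\bMod_A$ is transported by $L_f$ to an $E_1$-compatible action on $B$. Universality of $\underline{\mathcal Z}(B)$ then yields the $E_2$-map without any hands-on coherence checks.
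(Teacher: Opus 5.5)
\paragraph{The gap: $L_f$ is not strongly monoidal.} Your first step is false, and everything after it depends on it, including the transport of the $\underline{\mathcal Z}(A)$-action in your last paragraph. You correctly say that the binary oplax map is induced by $f:A\to B$ in the middle slot. But $L_f(M)\otimes_B L_f(N)\simeq B\otimes_A M\otimes_A B\otimes_A N\otimes_A B$, because the $B\otimes_B B$ collapses. So the map is
\[
B\otimes_A M\otimes_A A\otimes_A N\otimes_A B\xrightarrow{\;\id\otimes f\otimes\id\;}B\otimes_A M\otimes_A B\otimes_A N\otimes_A B,
\]
not $\id\otimes\sigma_i\otimes\id$, and it need not be an equivalence for a homological epimorphism.

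Here is a counterexample. Take $\cC=D(k)$, $A=k[t]$ and $B=k[t^{\pm1}]$. Then $A$-bimodules are $k[t_1,t_2]$-modules, with $t_1$ acting on the left and $t_2$ on the right, and $L_f$ inverts both $t_1$ and $t_2$. Let $M=k$ with $(t_1,t_2)\mapsto(1,0)$ and $N=k$ with $(t_1,t_2)\mapsto(0,1)$. Then $L_f(M)\simeq L_f(N)\simeq 0$. On the other hand, $M\otimes_A N\simeq k\otimes^{\mathbb L}_{k[t]}k\simeq k\oplus k[1]$ with $t_1$ and $t_2$ both acting by $1$, so $L_f(M\otimes_A N)\not\simeq 0$.

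Your alternative via \cite[Proposition~2.2.1.9]{HA} fails for the same reason. That result requires $L$-equivalences to be stable under $\otimes_A$; here $M\to 0$ is one and $M\otimes_A N\to 0$ is not. This is strictly stronger than closure of the local objects under $\otimes_A$, which is all that Lemma~\ref{lemma:restriction_tensor_equiv} gives you.

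\paragraph{How the paper avoids this.} The paper never needs $L_f$ to be monoidal. It uses Eilenberg--Watts, ${}_A\bMod_A\simeq\operatorname{Fun}^L_{\cC}({}_A\bMod,{}_A\bMod)$, to view $\underline{\mathcal Z}(A)$ as the enriched endomorphisms of $\id_{{}_A\bMod}$. It then restricts these along $U:{}_B\bMod\to{}_A\bMod$, which is fully faithful and $\cC$-linear, hence enriched fully faithful. It lifts the restricted action to $\id_{{}_B\bMod}$ and invokes the universal property of the center. $L_f$ appears only afterwards, to identify the map on points, which is your third step.

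If you want to keep your route, you could try restricting $L_f$ to the full subcategory of ${}_A\bMod_A$ spanned by the objects $X\otimes A$ with $X\in\cC$. This subcategory is closed under $\otimes_A$, and it contains the enriched endomorphisms of $A$ and their tensor products. On it the comparison maps reduce to $X\otimes Y\otimes\bigl(B\otimes_A B\to B\otimes_A B\otimes_A B\bigr)$, which are equivalences when $\mu$ is. You would have to set this up explicitly, rather than deduce it from a global strong monoidality that does not hold.
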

\begin{proof}
We use left modules, so that Eilenberg--Watts has the usual composition order
\[
({}_A\bMod_A,\otimes_A)
 \simeq\operatorname{Fun}_{\cC}^{L}({}_A\bMod,{}_A\bMod),
\qquad M\longmapsto M\otimes_A-.
\]
See \cite[Theorem~4.8.4.1]{HA}. The unit corresponds to the identity functor, and its enriched endomorphisms give the $E_2$-center.

Restriction $U:{}_B\bMod\to{}_A\bMod$ is fully faithful and $\cC$-linear. Its enriched full faithfulness follows from full faithfulness on mapping spaces and by testing the enriched comparison against every $X\in\cC$ and using $U(X\otimes M)\simeq X\otimes U(M)$. The central action of $\underline{\mathcal Z}(A)$ on the identity restricts to the full subcategory identified with the category of left $B$-modules. Full faithfulness lifts this action, including its operadic coherences, to the identity functor there. The universal property of the center gives an $E_2$-map into $\underline{\mathcal Z}(B)$. This construction is enriched: for every parameter $X\in\cC$, a transformation $X\otimes-\to\id$ restricts along $U$, and enriched full faithfulness lifts it. Thus it defines a morphism of objects in $\cC$, not merely a map on points.

An enriched endomorphism $\alpha:X\otimes A\to A$ of the unit acts on the free $B$-module by $\alpha\otimes_A B$. Under the extension--restriction adjunction, the induced $B$-bimodule endomorphism is
\[
X\otimes B\simeq B\otimes_A(X\otimes A)\otimes_A B
 \xrightarrow{\id_B\otimes_A\alpha\otimes_A\id_B}
 B\otimes_A B\simeq B.
\]
The identifications use the counit equivalences and are natural in $X$ and $\alpha$. This identifies the map with the enriched version of $L_f$ and proves the comparison on center spaces.
\end{proof}

\begin{cor}\label{cor:epi_from_commutative_is_commutative}
Assume that $\cC$ is presentable symmetric monoidal and tensoring preserves small colimits separately. Let $f:A\to B$ be a homological epimorphism, and fix a commutative structure on $A$ refining its associative structure. Then $B$ admits a commutative refinement such that $f$ lifts to a morphism of commutative algebras. The space of such refinements and lifts, with their identifications with the given associative morphism,
is contractible.
\end{cor}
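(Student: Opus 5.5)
The plan is to work inside $\cD=(\bMod_A,\otimes_A)$, which is presentable symmetric monoidal because $A$ is commutative. By Proposition~\ref{prop:fully_faithful} and Lemma~\ref{lemma:restriction_tensor_equiv}, restriction $\bMod_B\to\bMod_A$ is fully faithful (using right modules here). It also converts $\otimes_B$ into $\otimes_A$. Its essential image $\cE\subseteq\cD$ is therefore closed under $\otimes_A$: for $M,N$ in the image, $M\otimes_A N\simeq M\otimes_B N$ computed via $B$. Moreover $B\otimes_A B\simeq B$.

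The first step is to show that $\cE$ is a smashing localization of $\cD$. I will check that $B\otimes_A-:\cD\to\cD$ is a localization with essential image $\cE$, using the unit $f\otimes\id:X\to B\otimes_A X$. The unit becomes an equivalence after applying $B\otimes_A-$ because $B\otimes_A B\simeq B$. It remains to see that objects of the form $B\otimes_A X$ are exactly $\cE$. One direction is that $B\otimes_A X$ is a $B$-module. For the other, a $B$-module $M$ satisfies $B\otimes_A M\simeq (B\otimes_A B)\otimes_B M\simeq M$.

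So $f:A\to B$ exhibits $B$ as an idempotent object in the symmetric monoidal $\cD$. By Lurie's theory of idempotent objects \cite[\S4.8.2]{HA}, in particular Proposition~4.8.2.9, such an idempotent $A\to B$ admits an essentially unique commutative algebra structure in $\cD$ with unit $f$. In fact the space of idempotent objects is equivalent to the full subcategory of $\bC\bAlg(\cD)$ spanned by algebras whose multiplication is an equivalence. Passing along $\bC\bAlg(\bMod_A)\simeq\bC\bAlg(\cC)_{A/}$ yields a commutative algebra $B'$ under $A$.

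The second step is to identify the underlying associative algebra of $B'$ with the given $B$, compatibly with $f$. The underlying $E_1$-algebra of $B'$ in $\cD$ is an associative algebra in $\bMod_A$ whose multiplication is the equivalence $B\otimes_A B\to B$. For idempotents, the space of $E_1$-structures with unit $f$ is also contractible, by the same result applied with the associative operad. Hence the restriction of $B'$, viewed in $\bAlg(\cC)_{A/}$, has underlying idempotent equivalent to $(B,f)$. Since $f$ is homotopical by Theorem~\ref{thm:main}(1), Lemma~\ref{lemma:homotopical_epi_mapping_spaces} shows that any two associative $A$-algebras under $A$ that are both idempotent with the same underlying object are uniquely equivalent. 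Concretely, the equivalence $B\to B'$ in $\bAlg(\cC)_{A/}$ exists and is unique because both are $B$-algebras receiving $B$ essentially uniquely.

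The contractibility claim is the main obstacle. The space in question is the fiber of $\bC\bAlg(\cC)_{A/}\to\bAlg(\cC)_{A/}$ over $(B,f)$, taken in the maximal subgroupoids. My approach is to show that both the source and the target, restricted to homological epimorphisms out of $A$, are equivalent to the poset of smashing localizations of $\cD$. The target is handled by Theorem~\ref{thm:main} together with the fully faithfulness statements, and the source by Lurie's idempotent classification. The forgetful map then commutes with these identifications, so it is an equivalence on these subgroupoids, and its fibers are contractible. The delicate point is that the target uses associative algebras in $\cC$ rather than in $\cD$, since no central structure is given. Here I will first use Corollary~\ref{cor:map_on_centers_enriched} to produce the central structure: the $E_2$-map $A\to\underline{\mathcal Z}(A)\to\underline{\mathcal Z}(B)$, combined with Proposition~\ref{prop:central_algebra_definition}, gives a canonical central structure on $f$. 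The remaining task is to argue that this central structure is unique, again by the $(-1)$-truncatedness from homotopicity.
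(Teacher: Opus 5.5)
\paragraph{There is a genuine gap.} You use the same ingredients as the paper: the central structure from $A\to\underline{\mathcal Z}(A)\to\underline{\mathcal Z}(B)$, idempotent objects of $\cD=(\bMod_A,\otimes_A)$, \cite[Proposition~4.8.2.9]{HA}, and uniqueness from homotopicity. However, you bring in the central structure only at the end, for contractibility, whereas existence already needs it. Without it, Steps~1 and~2 do not go through as written.

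\paragraph{Step 2 is circular.} Lemma~\ref{lemma:homotopical_epi_mapping_spaces} only says that $\Map_{\bAlg(\cC)_{A/}}(B,B')$ is empty or contractible. What has to be proved is that it is nonempty, i.e.\ that the unit $A\to B'$ extends to an associative algebra map $B\to B'$. So ``both are $B$-algebras receiving $B$ essentially uniquely'' assumes the conclusion. The $E_1$-version of the idempotent classification does not close this either. It compares objects of $\bAlg(\cD)$, whereas the given $B$ lives only in $\bAlg(\cC)_{A/}$. Knowing that $B$ and $B'$ have the same underlying pointed $A$-module says nothing by itself about whether their multiplications in $\cC$ agree.

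\paragraph{Step 1 has the same hidden dependence.} To treat $B\otimes_A-$ as an endofunctor of $\cD$ and $(B,f)$ as an idempotent object of the \emph{symmetric monoidal} $\cD$, you must identify the bimodule tensor product $B\otimes_A B$ with the tensor product of $\cD$. The former uses the right $A$-action on the first factor and the left $A$-action on the second, both through $f$. Identifying it with the $\cD$-tensor means identifying these two $A$-actions on $B$ through the symmetry, which is the bimodule part of a central structure. Step~1 also mixes left and right modules. $\cE$ is the image of right $B$-modules, but $B\otimes_A X$ and $B\otimes_A M\simeq(B\otimes_A B)\otimes_B M$ are statements about left $B$-modules. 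Moreover, ``restriction converts $\otimes_B$ into $\otimes_A$'' has no meaning for two right $B$-modules; Lemma~\ref{lemma:restriction_tensor_equiv} concerns bimodules.

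\paragraph{Repair.} Reorder as the paper does.
\begin{enumerate}
\item Use Corollary~\ref{cor:map_on_centers_enriched} and Proposition~\ref{prop:central_algebra_definition} to get $\widetilde B\in\bAlg(\bMod_A)$ with $\Phi_A(\widetilde B)\simeq(f:A\to B)$.
\item Its multiplication is $\mu:B\otimes_A B\to B$, which is an equivalence, and unitality makes $\id_B\otimes_A f$ its inverse. So $(B,f)$ is idempotent in $\bMod_A$, with no localization argument needed.
\item \cite[Proposition~4.8.2.9]{HA} produces the commutative refinement. Its underlying associative algebra in $\bMod_A$ and $\widetilde B$ are associative algebras with the same idempotent unit, hence equivalent. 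Applying $\Phi_A$ gives the required identification with $f$ in $\bAlg(\cC)_{A/}$.
\item For contractibility you need neither the comparison with the poset of smashing localizations nor a separate uniqueness statement for central structures. Any commutative refinement has, as underlying pointed $A$-module, the restriction of the regular $B$-module along the fixed map $f$, so it is the same idempotent object. The full faithfulness in \cite[Proposition~4.8.2.9]{HA} then gives a contractible space.
\end{enumerate}

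\paragraph{Alternative without centers.} You could instead show, via bar resolutions and $\cC$-linearity, that $X\mapsto X\otimes_A B$ is a smashing localization of $\cD$. You would then compare $B$ with $B'$ through the Eilenberg--Watts argument of the well-copoweredness subsection. That argument would have to be carried out; it is not in your proposal.
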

\begin{proof}
The central self-action of $A$ gives a morphism $A\to\underline{\mathcal Z}(A)$ of $E_2$-algebras. Composing with Corollary~\ref{cor:map_on_centers_enriched} gives
\[
A\longrightarrow\underline{\mathcal Z}(B)
\]
whose evaluation is $f$. Proposition~\ref{prop:central_algebra_definition} therefore gives a coherent central structure on $f$, making $B$ an associative algebra in $(\bMod_A,\otimes_A)$. Its multiplication $\mu$ is an equivalence, and unitality makes $\id_B\otimes_A f$ its inverse. Thus $(B,f)$ is an idempotent pointed object. Proposition~4.8.2.9 of \cite{HA} supplies its commutative algebra structure, uniquely up to contractible choices. Forgetting the commutative structure recovers the given
morphism $f:A\to B$, up to equivalence in
$\bAlg(\cC)_{A/}$. The restriction of scalars $\bMod_B\longrightarrow\bMod_A$ satisfies the hypotheses of Lemma~\ref{lemma:lax_ff_alg} and therefore induces a fully faithful inclusion of $\bAlg(\bMod_B)$ into $\bAlg(\bMod_A)$.
Under the equivalence
\[
\bC\bAlg(\bMod_A)\simeq\bC\bAlg(\cC)_{A/},
\]
the commutative algebra constructed above corresponds to
a commutative algebra structure on $B$ together with a
morphism of commutative algebras $A\to B$ whose underlying
associative morphism is equivalent to $f$. Any other refinement yields the same pointed $A$-module: its action is restriction of the fixed regular $B$-module along the fixed associative map $f$. The full faithfulness in Proposition~4.8.2.9 of \cite{HA}, together with the uniqueness statements above shows that the space of commutative refinements, together with an identification of their underlying associative morphism with the given $f:A\to B$, is contractible.
\end{proof}

\begin{rmk}
Corollary~\ref{cor:epi_from_commutative_is_commutative} extends a classical theorem of Silver. If $f:A\to B$ is an epimorphism in the category of unital associative
rings and $A$ is commutative, then $B$ is commutative \cite[Corollary~1.2]{SIL}.
\end{rmk}

\begin{rmk}\label{rmk:hochschild_gerstenhaber}
Under the hypotheses of Corollary~\ref{cor:map_on_centers_enriched}, the enriched center is the Hochschild cochain object
\[
\underline{\mathcal Z}(A)
 \simeq\underline{\operatorname{HH}}^\bullet(A)
 =\underline{\Hom}_{{}_A\bMod_A}(A,A).
\]
Corollary~\ref{cor:map_on_centers_enriched} shows that homological epimorphisms induce morphisms $\underline{\operatorname{HH}}^\bullet(A) \to \underline{\operatorname{HH}}^\bullet(B)$ of $E_2$-algebras.
\end{rmk}

\section{Examples and applications}  \label{sec:applications}

We conclude with some consequences and examples of Theorem \ref{thm:main}. Besides recovering the equivalence between homological and homotopical epimorphisms for differential graded algebras, the theorem gives a simple characterization in terms of the associative cotangent complex and provides a connection with the notion of localization appearing in derived algebraic and analytic geometry. 

\subsection{Relative differentials and the cotangent complex} 
\addtocontents{toc}{\protect\setcounter{tocdepth}{1}}

Assume in this subsection that $\cC$ is symmetric monoidal, stable and presentable and that tensoring preserves small colimits in each variable. For $f:A\to B$, the associative relative cotangent complex is
\[
\mathbb L^{\mathrm{nc}}_{B/A}
 \simeq\operatorname{fib}\left(B\otimes_A B
                 \xrightarrow{\mu_{B/A}}B\right)
\]
as a $B$-bimodule. This is the relative associative form of \cite[Theorem~7.3.5.1]{HA}. The canonical relative derivation has its universal coherent lift
\[
d_{B/A}:B\longrightarrow\mathbb L^{\mathrm{nc}}_{B/A},
\qquad
\operatorname{Der}_A(B,M)
 \simeq\Map_{{}_B\bMod_B}(\mathbb L^{\mathrm{nc}}_{B/A},M).
\]
\begin{cor}\label{cor:cotangent_epi}
Under the hypotheses fixed on $\cC$, the following are equivalent:
\begin{enumerate}
\item $f$ is homotopical;
\item $f$ is homological;
\item $\mathbb L^{\mathrm{nc}}_{B/A}\simeq0$.
\end{enumerate}
\end{cor}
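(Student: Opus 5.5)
The equivalence of (1) and (2) is Theorem~\ref{thm:main}(2), once we check its hypotheses in the present setting. A stable $\infty$-category is additive. Since $\cC$ is presentable, it admits geometric realizations, and the pushout $B\ast_A B$ exists in $\bAlg(\cC)$ by \cite[Corollary~3.2.3.3]{HA}. Tensoring preserves small colimits in each variable, so it preserves both geometric realizations and finite coproducts. Thus it remains to show that (2) and (3) are equivalent.

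For (2)$\Leftrightarrow$(3) we use the defining fiber sequence of $B$-bimodules
\[
\mathbb L^{\mathrm{nc}}_{B/A}\longrightarrow B\otimes_A B\xrightarrow{\mu_{B/A}}B .
\]
The category ${}_B\bMod_B$ is stable, because it is an $\infty$-category of bimodules over algebras in a stable presentable category whose tensor product is exact in each variable. In a stable $\infty$-category a morphism is an equivalence if and only if its fiber is zero. Hence $\mu_{B/A}$ is an equivalence of $B$-bimodules exactly when $\mathbb L^{\mathrm{nc}}_{B/A}\simeq 0$. By Definition~\ref{defn:homotopy_epimorphism}, this says $f$ is homological exactly when $\mathbb L^{\mathrm{nc}}_{B/A}\simeq0$.

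Alternatively, one can run (3)$\Rightarrow$(1) through the corepresentability $\operatorname{Der}_A(B,M)\simeq\Map_{{}_B\bMod_B}(\mathbb L^{\mathrm{nc}}_{B/A},M)$. If $\mathbb L^{\mathrm{nc}}_{B/A}\simeq 0$, then every space $\operatorname{Der}_A(B,M)$ is contractible. Taking $M=B\otimes_A B$, the canonical derivation of Lemma~\ref{lemma:canonical_derivation} is then homotopic to zero, so $\mu_{B/A}$ is an equivalence. The fiber argument above is shorter, however, and does not depend on the universal property of $d_{B/A}$.

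The only point needing care is that the fiber in the definition of $\mathbb L^{\mathrm{nc}}_{B/A}$ is computed in ${}_B\bMod_B$ and agrees with the fiber computed in $\cC$. The forgetful functor ${}_B\bMod_B\to\cC$ is exact and, by Proposition~\ref{prop:bimodules_forget_conservative}, conservative, so it is harmless to test the vanishing either in ${}_B\bMod_B$ or in $\cC$.
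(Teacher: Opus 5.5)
Your proposal is correct and follows the paper's proof: (1)$\Leftrightarrow$(2) is Theorem~\ref{thm:main}(2), and (2)$\Leftrightarrow$(3) holds because in the stable $\infty$-category ${}_B\bMod_B$ a morphism is an equivalence exactly when its fiber vanishes. Your explicit checks (stable implies additive, presentability gives geometric realizations and algebra pushouts, colimit-preserving tensor products preserve finite coproducts, and the bimodule category is stable) fill in details the paper's one-line proof leaves implicit.
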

\begin{proof}
The first equivalence is Theorem~\ref{thm:main}. Stability makes a morphism an equivalence precisely when its fiber is zero.
\end{proof}

For differential graded algebras, this recovers the derived module of relative differentials used by Chuang and Lazarev \cite[\S3]{ChuangLazarev}. In particular, Theorem~\ref{thm:main} specializes to their equivalence between homological and homotopical epimorphisms of dg-algebras \cite[Theorem~4.4]{ChuangLazarev}.

\begin{rmk}
The associative cotangent complex
$\mathbb L^{\mathrm{nc}}_{B/A}$ represents associative derivations with values in arbitrary $B$-bimodules. It should be distinguished from Illusie's commutative cotangent complex $\mathbb L^{\mathrm{comm}}$, which govern commutative deformation theory. Vanishing of the commutative cotangent complex does not imply that a morphism is a homotopical epimorphism. For example, let $k$ be a field and $B=k\times k$. The diagonal map $k\to B$ is finite \'etale, hence $\mathbb L^{\mathrm{comm}}_{B/k}\simeq0$, whereas
\[
\mathbb L^{\mathrm{nc}}_{B/k}
\simeq
\ker\bigl(B\otimes_k B\xrightarrow{\mu}B\bigr)
\neq0.
\]
Indeed, writing $e_1,e_2$ for the standard idempotents of $B$, this kernel is spanned by $e_1\otimes e_2$ and $e_2\otimes e_1$. Thus $k\to B$ is not homological and, by Theorem~\ref{thm:main}, is not homotopical.
\end{rmk}

\subsection{Unstable homotopical epimorphisms}
\addtocontents{toc}{\protect\setcounter{tocdepth}{1}}
Theorem \ref{thm:main} extends the equivalence between homological and
homotopical epimorphisms proved by Chuang and Lazarev for differential
graded algebras. Their result is proved for the stable derived category of chain complexes. In
\cite[Remark 4.2]{ChuangLazarev}, they ask whether the equivalence is a
general categorical phenomenon rather than a peculiarity of dg-algebras.

Theorem~\ref{thm:main} shows that the stability of $\cC$ is not required: additivity together with preservation of finite coproducts by tensoring suffices for the converse in the abstract setting. In particular, our result applies to the following unstable example.

\begin{exa}
Let $\cC$ be simplicial abelian groups with weak equivalences inverted and the derived tensor product. Dold--Kan identifies its underlying $\infty$-category with $D^{\leq0}(\mathbb Z)$. It is additive and presentable, tensoring preserves small colimits separately, and it is not stable. Consequently, for a map of simplicial associative rings in the derived $\infty$-category,
\[
B\ast_A B\simeq B
\quad\Longleftrightarrow\quad
B\otimes_A B\simeq B.
\]
This connective comparison is used (and proved in this specific case) in \cite{BambozziCapoferriMurro}.
\end{exa}

It is worth observing that only one implication requires additivity.
The proof that
\[
\text{homological epimorphism}
\Longrightarrow
\text{homotopical epimorphism}
\]
uses no additive structure. On the other hand, the converse can fail in a non-additive setting.

\begin{rmk}\label{rmk:additivity_necessary}
Consider the $\infty$-category of spaces $\cS$ with cartesian tensor product and a nontrivial acyclic discrete group $G$, regarded as a grouplike $E_1$-space. The map $G\to \star$ is not homological, since
\[
\star \times_G \star \simeq BG,
\]
whose fundamental group is $G$.
It is nevertheless homotopical. The pushout  $\star \ast_G \star$ in $E_1$-spaces is grouplike: $\pi_0$ preserves colimits into ordinary monoids, and this pushout of monoids is trivial. Thus it is also the pushout in grouplike $E_1$-spaces. Their equivalence with pointed connected spaces \cite[\S5.2.6]{HA} gives
\[
\star \ast_G \star \simeq\Omega\Sigma BG.
\]
The suspension of the connected acyclic CW-space $BG$ is simply connected and acyclic, hence contractible by the Hurewicz and Whitehead theorem. The codiagonal is therefore an equivalence.
\end{rmk}

\subsection{Comodules and localization}
\addtocontents{toc}{\protect\setcounter{tocdepth}{1}}
Let $k$ be a field, let $G$ be a nonabelian discrete group, and equip $H=k[G]$ with its usual Hopf algebra structure
\[
\Delta(g)=g\otimes g,\qquad
\epsilon(g)=1,\qquad S(g)=g^{-1}.
\]
Write $\operatorname{Comod}_H$ for the abelian category of right $H$-comodules and set
\[
\cC=D^{\leq0}(\operatorname{Comod}_H).
\]
Its tensor product is induced by tensor product over $k$ with coaction
\[
\rho(x\otimes y)
=\sum x_{(0)}\otimes y_{(0)}\otimes x_{(1)}y_{(1)}.
\]
The unit is $k$ with its trivial coaction. A right $H$-comodule is equivalently described as a $G$-graded vector space
\[
V=\bigoplus_{g\in G}V_g,\qquad
V_g=\{v\mid\rho(v)=v\otimes g\}.
\]
Accordingly, $\cC \simeq \operatorname{Fun}(G_{\mathrm{disc}},D^{\leq0}(k))$, with $(X \otimes Y)_g = \bigoplus_{ab=g} X_a \otimes_k^{\mathbb L} Y_b$. The category $\cC$ is additive and presentable, and its tensor product preserves small colimits separately.
It is not stable and admits no braiding: if $k_g$ is the one-dimensional comodule with coaction $1\mapsto1\otimes g$, then
\[
k_g\otimes k_h\simeq k_{gh},\qquad
k_h\otimes k_g\simeq k_{hg},
\]
and these comodules are not equivalent when $gh\neq hg$.
Consider the $H$-comodule algebras
\[
A=H\otimes_k k[t],\qquad
B=H\otimes_k k[t,t^{-1}],
\]
concentrated in degree zero, with $t$ central and coinvariant
\[
\rho_A(gt^n)=gt^n\otimes g,\qquad
\rho_B(gt^n)=gt^n\otimes g.
\]
As a right $A$-module in $\cC$,
\[
B\simeq\operatorname*{colim}
\bigl(A\xrightarrow{t}A\xrightarrow{t}A\xrightarrow{t}\cdots\bigr),
\]
where the map from stage $n$ is $a\mapsto t^{-n}a$, starting with $n=0$. Hence
\[
B\otimes_A^{\mathbb L}B
\simeq\operatorname*{colim}
\bigl(B\xrightarrow{t}B\xrightarrow{t}B\xrightarrow{t}\cdots\bigr)
\xrightarrow{\;\mu_{B/A}\;}B.
\]
The map from stage $n$ is $b\mapsto t^{-n}b$ and is an equivalence. Thus $A\to B$ is homological and, by Theorem~\ref{thm:main}, homotopical. The same construction applies in $D(\operatorname{Comod}_H)$.

\begin{rmk}
\label{rmk:comodule_localization_geometry}
More generally, if $T$ is an ordinary $H$-comodule algebra
and $s\in T$ is central and coinvariant, the same telescope argument gives
\[
T[s^{-1}]\otimes_T^{\mathbb L}T[s^{-1}]
\xrightarrow{\;\simeq\;}T[s^{-1}].
\]
Its coaction extends by $\rho(s^{-1})=s^{-1}\otimes1$.
Thus central coinvariant localizations are homotopical
epimorphisms of associative algebras in $\cC$. Indeed, for a commutative $k$-algebra $R$, put $A_R=R\otimes_kH$, with coaction $\id_R\otimes\Delta$ and $A_R^{\operatorname{co}H}=R$.
Coinvariants induce an equivalence
\[
\bMod_{A_R}(\cC)\simeq D^{\leq0}(R),\qquad
M\longmapsto M^{\operatorname{co}H},
\]
whose inverse is $N\mapsto N\otimes_R A_R$.
Indeed, on ordinary compatible right modules and comodules,
the action map
\[
M^{\operatorname{co}H}\otimes_k H\longrightarrow M,
\qquad m\otimes g\longmapsto m(1\otimes g),
\]
is an isomorphism: its inverse sends a homogeneous element
$m_g$ to $m_g(1\otimes g^{-1})\otimes g$.
The equivalence of ordinary module categories is a special case of Dade's theorem \cite[Theorem~2.8]{DadeGraded}:
the algebra $A_R$ is strongly $G$-graded, with components $(A_R)_g=R(1\otimes g)$ and identity component $R$, and coinvariants correspond to the identity component
of a graded module. Since this equivalence is exact, it induces the equivalence of derived categories.
\end{rmk}

\subsection{Geometric localizations}
\addtocontents{toc}{\protect\setcounter{tocdepth}{1}}
For the discussion of commutative algebras in this subsection,
the ambient monoidal categories are assumed symmetric.

One of the basic motivations for homological epimorphisms comes from algebraic geometry. For a morphism of commutative rings 
\[ f:A\longrightarrow B, \] 
the condition that \(f\) is an epimorphism is equivalent to the multiplication morphism 
\[ B\otimes_A B\longrightarrow B \] 
being an isomorphism. If \(f\) is moreover flat and of finite presentation, this is precisely the algebraic condition corresponding to an open immersion 
\[ \Spec(B)\longrightarrow\Spec(A). \] 
Thus tensor-idempotent morphisms may be regarded as an algebraic generalization of affine open immersions, once the finiteness and flatness conditions are dropped. The same point of view plays an important role in derived algebraic geometry. In the framework of To\"en and Vezzosi, a morphism $f:A\to B$ of commutative algebras defines a formal Zariski open immersion $\Spec(B)\to\Spec(A)$ when the derived extension of scalars
\[
B\otimes_A^{\mathbb L}-:\bMod_A\longrightarrow\bMod_B
\]
preserves finite homotopy limits and restriction of scalars
\[
f_*:\bMod_B\longrightarrow\bMod_A
\]
is fully faithful \cite[Definition~1.2.6.1(2)--(3)]{HAGII}. The latter condition is equivalent to the multiplication map $B\otimes_A^{\mathbb L}B\to B$ being an equivalence \cite[Proposition~1.2.6.4(1)]{HAGII}. If the ambient category is stable, extension of scalars automatically preserves finite homotopy limits. Thus, in that setting, formal Zariski open immersions correspond precisely to homological epimorphisms of commutative algebras \cite[Corollary~1.2.6.6]{HAGII}. In particular, the identity 
\[ B\otimes_A^{\mathbb L}B\simeq B \] 
is one of the fundamental algebraic manifestations of the fact that \(\Spec(B)\to\Spec(A)\) behaves as a monomorphism of derived affine spaces. This description has a natural noncommutative counterpart. For an associative algebra \(A\), a homological epimorphism 
\[ A\longrightarrow B \] 
is equivalently a morphism for which restriction of scalars 
\[ \bMod_B\longrightarrow\bMod_A \] 
is fully faithful. Hence the category of \(B\)-modules is identified with a full reflective subcategory of the category of \(A\)-modules. This provides a natural replacement, in the absence of an underlying space of prime ideals, for the idea of restricting quasi-coherent modules to an open subspace. This perspective is closely related to the classical theory of noncommutative localization, including Ore and universal localizations, and to their interpretation in terms of localizations of module categories; see for example \cite{SkodaNCLocalization} and the references therein. In particular, stably flat universal localizations provide
standard examples of homological epimorphisms. A geometric use of this idea was developed in \cite{BambozziCapoferriMurro}. There, homotopical epimorphisms of connective derived rings are taken as the basic class of noncommutative localizations. If 
\[ X=\Spec(A) \] 
denotes the corresponding formal noncommutative affine object, a homotopical epimorphism 
\[ A\longrightarrow B \] 
is regarded contravariantly as a homotopy monomorphism 
\[ \Spec(B)\longrightarrow\Spec(A). \]
Families of such morphisms are then used to define a formal homotopy Zariski topology and, ultimately, a noncommutative spectrum. For commutative algebras this construction can be compared with the usual Grothendieck spectrum, while in the genuinely noncommutative case it provides a geometric interpretation of suitable localizations of derived module categories. Theorem \ref{thm:main} shows that, in an additive monoidal $\infty$-category, the two possible descriptions of these localizations coincide: 
\[ B\otimes_A B\simeq B \qquad\Longleftrightarrow\qquad B\ast_A B\simeq B. \] 
Thus the module-theoretic condition of being a homological epimorphism is equivalent to the intrinsic algebraic condition that the codiagonal 
\[ B\ast_A B\longrightarrow B \] 
be an equivalence. In particular, the class of noncommutative open localizations used in \cite{BambozziCapoferriMurro} can equivalently be described in terms of the corresponding localization of module categories. Theorem \ref{thm:main} shows moreover that this identification is not specific to differential graded rings, but is a general feature of additive monoidal $\infty$-categories that satisfy the mild hypotheses of the theorem.

\subsection{Bornological setting}
\addtocontents{toc}{\protect\setcounter{tocdepth}{1}}
Let $k$ be a complete valued field and let $\bCBorn_k$ denote the category of complete bornological $k$-vector spaces of convex type. Bornological vector spaces provide a convenient functional-analytic setting in which boundedness, rather than continuity with respect to a fixed topology, is taken as the basic notion. We refer to \cite{MeyerBornological} for a comparison between bornological and topological functional analysis and to \cite{BambozziBenBassatDagger} for the use of complete bornological spaces in analytic geometry. The category $\bCBorn_k$ is additive and quasi-abelian and is equipped with a closed symmetric monoidal structure whose tensor product is the completed projective tensor product 
\[ E\widehat\otimes_k F. \] 
The corresponding internal Hom is the space of bounded linear maps endowed with the equibounded bornology. Moreover, $\bCBorn_k$ admits all limits and colimits and has enough flat projective objects; see \cite[\S3.3]{BambozziBenBassatDagger}. 

The homological algebra of quasi-abelian categories is developed in \cite{SchneidersQA,ProsmansDerived}. Derived bornological and Ind-Banach module categories provide the framework for the analytic localizations in \cite{BambozziBenBassatDagger,BenBassatKremnizerRAG,BambozziBenBassatKremnizerStein,BenBassatMukherjee}. Their relevant algebraic condition is
\[
B\widehat\otimes_A^{\mathbb L}B\simeq B.
\]
The $\infty$-category $D(\bCBorn_k)$ satisfies the hypotheses of Theorem~\ref{thm:main}, and its tensor product preserves finite coproducts. Hence, the theorem identifies this condition with
\[
B\ast_A^\L B\simeq B.
\]
The same applies to a connective subcategory $D^{\le 0}(\bCBorn_k)$. This is one of the key inputs needed to transport to  the analytic setting the ideas developed in \cite{BambozziCapoferriMurro}. 

The geometric meaning of homological epimorphism has been established in several classes of commutative analytic algebras and turns out to be very similar to the work of To\"en and Vezzosi in the algebraic framework.
For a morphism $A\to B$ of affinoid algebras over a complete
non-archimedean field, Ben-Bassat and Kremnizer prove that
\[
B\widehat\otimes_A^{\mathbb L}B\simeq B
\]
holds if and only if the induced morphism of Berkovich spectra
$\mathcal M(B)\to\mathcal M(A)$ is an affinoid domain embedding
\cite[Theorems~5.16 and~5.31]{BenBassatKremnizerRAG}.
They also show that the resulting formal homotopical Zariski
topology agrees with the weak $G$-topology
\cite[Theorem~5.37]{BenBassatKremnizerRAG}.
Here affinoid domains are understood in the sense of admissible open subset for the $G$-topology, they need not be open subsets of the underlying Berkovich topological space.

The paper \cite{BambozziBenBassatDagger} establishes the analogous theorem for dagger affinoid domains, using the completed bornological tensor product, over both archimedean and non-archimedean valued fields \cite[Theorems~5.7 and~5.11]{BambozziBenBassatDagger}. The corresponding comparison of covering families recovers the weak $G$-topology of dagger affinoid spaces \cite[Theorem~5.15]{BambozziBenBassatDagger}.

For the Stein spaces and algebras considered in \cite{BambozziBenBassatKremnizerStein}, the same condition characterizes open immersions: a morphism $Y\to X$ is an open immersion if and only if the induced morphism $\mathcal O(X)\to\mathcal O(Y)$ is a homotopical epimorphism in the bornological setting \cite[Theorems~5.5 and~5.7]{BambozziBenBassatKremnizerStein}. The comparison of coverings is given in \cite[Corollary~5.14]{BambozziBenBassatKremnizerStein}. In particular, over $\mathbb C$ these results recover the open subsets of the usual analytic topology of complex Stein spaces.

The complex analytic case has earlier antecedents in Taylor's theory of localizations of topological algebras \cite[Proposition~4.3]{TaylorFunctionalCalculus} and Pirkovskii's results for Stein manifolds \cite[Theorem~3.1]{PirkovskiiSteinHomology}.
In the setting of Fr\'echet algebras, Aristov and Pirkovskii characterize open embeddings of Stein spaces by weak homological epimorphisms, or equivalently by $1$-pseudoflat epimorphisms, in their terminology \cite[Theorem~4.2]{AristovPirkovskiiOpen}.
They also prove a corresponding characterization for smooth manifolds using their algebras of smooth functions \cite[Theorem~5.3]{AristovPirkovskiiOpen}.

Beyond this classical setting, more advanced results in this direction have been obtained in \cite[\S\S4--5]{BambozziKremnizerSheafyness} and \cite[Theorem~5.8 and \S6]{BenBassatMukherjee}.

We also mention that in \cite[\S2]{BambozziMiharaHomotopy} it is shown that homotopical epimorphisms between commutative $C^\ast$-algebras correspond to closed embeddings of their associated Gelfand spectra.

Homological epimorphisms also play a crucial role in \cite{BambozziChiarellottoVanniTempered}. There, the lattice of homological epimorphisms from a commutative ind-Banach algebra is used to construct its derived analytic spectrum, together with its structure sheaf \cite[\S5]{BambozziChiarellottoVanniTempered}.
In particular, the homological epimorphism $R[t]\to R[[t]]_{\mathrm{temp}}$ realizes the algebra of tempered power series as functions on an open subset of the analytic affine line. This construction extends to tempered tubular neighborhoods of varieties in smooth formal schemes \cite[Propositions~5.27 and~8.12]{BambozziChiarellottoVanniTempered}.
The Hodge-completed derived de Rham cohomology of these tubes then defines tempered convergent cohomology, a tempered analogue of convergent rigid cohomology \cite[\S9]{BambozziChiarellottoVanniTempered}.

\subsection{Condensed setting}
\addtocontents{toc}{\protect\setcounter{tocdepth}{1}}
Condensed mathematics supplies further examples. Fix cardinal bounds for the profinite site, and let $\CondAb$ denote the resulting category of condensed abelian groups. It is an abelian category with compact projective generators. Its derived category $D(\CondAb)$ is stable and presentable with its derived symmetric monoidal tensor product, and $D^{\leq0}(\CondAb)$ is additive but not stable. The derived tensor product preserves colimits separately and preserves connective objects. Thus both categories satisfy the hypothesis of Theorem \ref{thm:main}; see \cite{ScholzeCondensed} for more information on this framework.

Solidification gives a symmetric monoidal localization and a corresponding derived solid tensor product
\[
-\otimes^{\mathbb L,\solid}-.
\]
For analytic rings, it is preferable to work directly with the stable derived category $D(\mathcal R)$ of complete modules. For a commutative analytic base $\mathcal R$, this has the analytic tensor product, and derived completion is left adjoint to its full embedding in condensed modules; see \cite{ScholzeCondensed,ScholzeAnalytic}. 

For associative algebra objects $P\to Q$ in a fixed analytic module category, we obtain
\[
Q\otimes_P^{\mathbb L,\mathcal R}Q\simeq Q
\quad\Longleftrightarrow\quad
Q\ast_P Q\simeq Q.
\]
The same statement holds in its connective part. This provides the algebraic input for condensed or solid versions of the noncommutative constructions of \cite{BambozziCapoferriMurro}. 

Homological epimorphisms have also been considered in the condensed approach to analytic geometry developed by Clausen and Scholze, frequently under the terminology of \emph{idempotent algebras}. Indeed, for
a morphism $A\to B$ of commutative algebras in a fixed symmetric monoidal derived category, the condition
\[
B\otimes_A^{\mathbb L}B\xrightarrow{\;\simeq\;}B
\]
says precisely that $B$ is an idempotent algebra in the category
of $A$-modules. Equivalently, restriction of scalars is fully
faithful, and $M\mapsto M\otimes_A^{\mathbb L}B$ is a
localization. These statements apply, in particular, to the
derived categories of solid and liquid modules, using their
respective tensor products.

An early example in the solid theory is the algebra
$\mathbb Z((T^{-1}))$ of functions near the boundary of
$\operatorname{Spec}\mathbb Z[T]$. Its idempotence in the
appropriate category of solid $\mathbb Z[T]$-modules is used
in the construction of compactly supported cohomology and
duality; see
\cite[Lecture~VIII, especially Observation~8.7]{ScholzeCondensed}.
The geometric role of localization is developed further in
\cite[Lecture~X]{ScholzeCondensed}.

For general analytic rings, Scholze defines a localization
$\mathcal A\to\mathcal B$ by the full faithfulness of restriction
on connective complete modules
\cite[Definition~12.16]{ScholzeAnalytic}. Among steady
morphisms, this is equivalent to
\[
\mathcal B\otimes_{\mathcal A}\mathcal B\simeq\mathcal B,
\]
where the tensor product is formed in the category of analytic
rings \cite[Exercise~12.17]{ScholzeAnalytic}. Steadiness expresses
compatibility of extension and restriction of scalars with
base change. Finite conservative families of steady
localizations satisfy descent for complete modules
\cite[Proposition~12.18]{ScholzeAnalytic}.

In non-archimedean geometry, Andreychev proves that an affinoid
open immersion into the adic spectrum of a sheafy analytic
complete Huber pair induces a steady localization of the
associated analytic rings
\cite[Proposition~4.12]{AndreychevAdic}. These localizations
provide the descent formalism for derived complete modules
on analytic adic spaces
\cite[Theorem~4.1]{AndreychevAdic}.

In the liquid theory, Clausen and Scholze use idempotent
algebras to construct a locale together with a sheaf of
module categories
\cite[Construction~5.2 and subsequent discussion]
{ClausenScholzeComplex}. Their computation of the idempotence
of algebras of overconvergent holomorphic functions supplies
the analytic input for constructing the holomorphic
structure sheaf
\cite[Proposition~5.6]{ClausenScholzeComplex}.

\subsection{Well-copoweredness}
\addtocontents{toc}{\protect\setcounter{tocdepth}{1}}
Assume here that $\cC$ is a stable presentable monoidal $\infty$-category, tensoring preserves small colimits separately, and $\Ho(\bMod_A)$ is compactly generated for every $A$ under consideration. If $f:A\to B$ is homological, fully faithful restriction identifies $\bMod_B$ with a reflective subcategory of $\bMod_A$. Its localization is
\[
L_B(M)=U_f(M\otimes_A B),\qquad
L_B^2\simeq L_B,
\]
and its kernel is
\[
\mathcal K_B=\{M\mid M\otimes_A B\simeq0\}.
\]
The localization is exact and preserves arbitrary coproducts; its local objects are also closed under coproducts. Thus $\Ho(\mathcal K_B)$ is a smashing subcategory. Smashing subcategories of a compactly generated triangulated category are classified by suitable ideals of morphisms in its essentially small subcategory of compact objects \cite[Corollary 12.5]{KrauseSmashing}. In particular, they form a set.

The kernel $\mathcal K_B$ determines the full subcategory
of local objects as its right orthogonal, and hence
determines the reflective localization $L_B$ and its unit
$\eta:\id\to L_B$ up to contractible choices.
The category $\bMod_A$ carries an action of $\cC$ given by
$(X,M)\mapsto X\otimes M$. The localization is compatible
with this action, through natural equivalences
\[
L_B(X\otimes M)
\simeq (X\otimes M)\otimes_A B
\simeq X\otimes L_B(M).
\]
This compatibility, including the associativity and unit
compatibilities, is what we mean by a $\cC$-linear
structure on $L_B$.

The same formula shows that tensoring with $X\in\cC$
preserves both local objects and local equivalences.
In particular,
\[
X\otimes\eta_M:X\otimes M\longrightarrow X\otimes L_B(M)
\]
is a local equivalence with local target. The universal
property of reflection therefore determines the
equivalence $L_B(X\otimes M)\simeq X\otimes L_B(M)$
compatibly with the localization unit, up to contractible
choices. It also determines its associativity and unit
compatibilities. Thus the kernel determines the
localization as a $\cC$-linear functor, together with
its monad unit and multiplication.

By the $\cC$-linear Eilenberg--Watts theorem
\cite[Theorem~4.8.4.1]{HA}, this functor corresponds to
the $A$-bimodule $L_B(A)\simeq B$. Its monad unit and
multiplication correspond to
\[
A\longrightarrow B,
\qquad
B\otimes_A B\longrightarrow B,
\]
with their algebra compatibilities. Hence the kernel
determines $B$ as an associative algebra under $A$.

\begin{prop}\label{prop:well_copowered}
Under these hypotheses, for each $A$ there is only a set of equivalence classes of homotopical epimorphisms $A\to B$. 
\end{prop}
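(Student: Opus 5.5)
The plan is to attach to each homotopical epimorphism $f:A\to B$ a smashing subcategory of $\Ho(\bMod_A)$. We then show that this assignment is injective on equivalence classes and conclude using Krause's set-theoretic bound. Since $\cC$ is stable, it is in particular additive, and tensoring preserves finite coproducts. Theorem~\ref{thm:main}(2) therefore applies: every homotopical epimorphism $f:A\to B$ is homological. (The pushout $B\ast_A B$ exists because $\cC$ is presentable and tensoring preserves colimits.) It thus suffices to bound the equivalence classes of homological epimorphisms out of $A$, where an equivalence class means an equivalence class of objects of $\bAlg(\cC)_{A/}$.

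For a homological $f$, Proposition~\ref{prop:fully_faithful} shows that restriction $\bMod_B\to\bMod_A$ is fully faithful. This yields the exact, coproduct-preserving localization $L_B=U_f(-\otimes_A B)$ with kernel $\mathcal K_B$. As recorded in the discussion preceding the statement, $\Ho(\mathcal K_B)$ is a smashing subcategory of the compactly generated triangulated category $\Ho(\bMod_A)$. Exactness holds because the localization is a left adjoint between stable categories. Closure of the local objects under coproducts holds because $-\otimes_A B$ commutes with colimits and restriction preserves coproducts. By \cite[Corollary 12.5]{KrauseSmashing}, smashing subcategories correspond to certain ideals of morphisms in the essentially small category of compact objects of $\Ho(\bMod_A)$, so they form a set. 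We obtain a map
\[
\{f:A\to B \text{ homotopical}\}/\simeq\;\longrightarrow\;\{\text{smashing subcategories of }\Ho(\bMod_A)\},\qquad f\mapsto \Ho(\mathcal K_B).
\]

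The main step is injectivity of this map, and here I rely on the argument given just before the proposition. Since $\mathcal K_B$ is a full subcategory, it is determined by its homotopy category. It determines the local objects as its right orthogonal, and hence the reflective localization $L_B$ together with its unit, up to contractible choice. The compatibility $L_B(X\otimes M)\simeq X\otimes L_B(M)$ is forced by the universal property of the reflection, because $X\otimes-$ preserves local objects and local equivalences. Consequently $L_B$ is determined as a $\cC$-linear monad. The $\cC$-linear Eilenberg--Watts theorem \cite[Theorem~4.8.4.1]{HA} then recovers $B\simeq L_B(A)$ as an algebra in $({}_A\bMod_A,\otimes_A)$, with unit $A\to B$ and multiplication $B\otimes_A B\to B$. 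Proposition~\ref{prop:algebras_bimodules} translates this into an object of $\bAlg(\cC)_{A/}$.

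Hence two homotopical epimorphisms with the same kernel are equivalent under $A$. The source of the map is therefore in bijection with a subset of a set, which proves the proposition. The delicate point, which I expect to be the main obstacle, is verifying that the monad structure, and not merely the underlying functor, is recovered from the kernel. I would handle this by noting that for an idempotent (smashing) localization the space of monad structures compatible with a given unit is contractible. The algebra structure is then forced once the unit $A\to B$ is known.
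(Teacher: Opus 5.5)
Your proposal is correct and follows the paper's proof essentially step for step. You reduce to homological epimorphisms via Theorem~\ref{thm:main}(2), send $f$ to the smashing kernel $\mathcal K_B$, bound these kernels by \cite[Corollary~12.5]{KrauseSmashing}, and get injectivity because $\mathcal K_B$ determines $L_B$ as a pointed $\cC$-linear idempotent monad, so $B\simeq L_B(A)$ is recovered as an algebra under $A$ by Eilenberg--Watts. The point you flag as delicate, recovering the multiplication and not just the functor, is handled in the paper exactly as you suggest: the universal property of the reflection fixes the unit and multiplication up to contractible choice.
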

\begin{proof}
The assignment $f\mapsto\mathcal K_B$ takes values in a set of smashing subcategories. Two maps with the same kernel have equivalent pointed $\cC$-linear localization monads and hence equivalent algebras under $A$, by the preceding argument. The claim follows from Theorem~\ref{thm:main}.
\end{proof}

\appendix
\section{An alternative proof in the closed case}\label{app:closed_proof}

Assume the hypotheses of Theorem~\ref{thm:main}(2).
Assume additionally that, for every $X\in\cC$, the functor
$-\otimes X$ admits a right adjoint, denoted
$\underline{\Hom}_\cC(X,-)$. Thus
\[
\Map_\cC(Y,\underline{\Hom}_\cC(X,Z))
\simeq\Map_\cC(Y\otimes X,Z)
\]
naturally in $Y,Z$. We give a second proof of the implication
from homotopical to homological epimorphisms.

\begin{lemma}\label{lemma:core_additive}
Let $V:\cE\to\cF$ be an additive functor between additive $\infty$-categories. If its functor on maximal subgroupoids is fully faithful, then $V$ is fully faithful.
\end{lemma}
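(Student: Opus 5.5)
The plan is to reduce full faithfulness of $V$ to a statement about $\pi_0$ of mapping spaces, which the groupoid hypothesis can see, and then upgrade to spaces using the grouplike structure. Fix objects $X,Y\in\cE$. Since $V$ is additive, the map
\[
V_{X,Y}:\Map_\cE(X,Y)\longrightarrow\Map_\cF(VX,VY)
\]
is a map of grouplike $E_\infty$-spaces. A map of grouplike $E_\infty$-spaces is an equivalence as soon as it induces a bijection on $\pi_0$ and an isomorphism on $\pi_n$ at the basepoint $0$ for every $n\geq1$, because translation by a point identifies all components with the zero component. So it suffices to control $\pi_0$ and the zero component.

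The mapping spaces into the maximal subgroupoid only see equivalences, so I would encode arbitrary morphisms by automorphisms of biproducts. To a morphism $g:X\to Y$ associate the shear automorphism
\[
e(g)=\begin{pmatrix}\id_X&0\\ g&\id_Y\end{pmatrix}:X\oplus Y\longrightarrow X\oplus Y,
\]
with inverse $e(-g)$; here additivity of $\cE$ is used to form $-g$. This gives a map of spaces $e:\Map_\cE(X,Y)\to\Map_{\cE^{\simeq}}(X\oplus Y,X\oplus Y)$. Composing with the projection to the lower-left matrix entry gives a retraction $\Map_{\cE^{\simeq}}(X\oplus Y,X\oplus Y)\to\Map_\cE(X,Y)$. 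The same construction exists in $\cF$, and since $V$ preserves biproducts and matrix entries, both $e$ and the entry projection commute with $V$. Therefore $V_{X,Y}$ is a retract, in the arrow category of spaces, of
\[
V^{\simeq}:\Map_{\cE^{\simeq}}(X\oplus Y,X\oplus Y)\longrightarrow\Map_{\cF^{\simeq}}(VX\oplus VY,VX\oplus VY)
\]
(after using $V(X\oplus Y)\simeq VX\oplus VY$). By hypothesis $V^{\simeq}$ is an equivalence, and a retract of an equivalence is an equivalence. So $V_{X,Y}$ is an equivalence, and this argument already handles all homotopy groups at once, without the $E_\infty$ reduction.

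The main obstacle is making the retraction diagram commute coherently. Functoriality of $e$ as a map of spaces is not a problem: it is the composite of $g\mapsto$ the matrix with entries $(\id,0,g,\id)$ in $\Map(X\oplus Y,X\oplus Y)$, which lands in the invertible components. The subtle points are the identification $V(X\oplus Y)\simeq VX\oplus VY$ compatibly with the matrix calculus, and the fact that $\Map_{\cE^{\simeq}}$ is the union of invertible components of $\Map_\cE$. I would handle both by noting that an additive functor induces a homotopy commutative square on matrix decompositions $\Map(X\oplus Y,X\oplus Y)\simeq\prod\Map(-,-)$, and that homotopy commutativity of the retract squares is enough, since "retract of an equivalence" only needs to be checked on homotopy groups.
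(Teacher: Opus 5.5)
Your proposal is correct and is essentially the paper's proof: you exhibit $\Map_\cE(X,Y)$ as a retract of $\operatorname{Aut}_\cE(X\oplus Y)$ via the shear matrix and the lower-left entry, observe that this retraction is compatible with the additive functor $V$, and conclude because a retract of an equivalence is an equivalence. The opening reduction via grouplike $E_\infty$-structures is unnecessary, as you note yourself. Homotopy-commutative squares indeed suffice, but the reason is that in any category a retract of an isomorphism is an isomorphism, applied here in the homotopy category of spaces; it is not because the check can be made on homotopy groups.
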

\begin{proof}
For $M,N\in\cE$, the whole mapping space $\Map_\cE(M,N)$ is a retract of $\operatorname{Aut}_\cE(M\oplus N)$. The maps are
\[
g\longmapsto
\begin{pmatrix}\id_M&0\\g&\id_N\end{pmatrix},
\qquad
h\longmapsto p_Nhi_M.
\]
The inverse of the displayed automorphism replaces $g$ by $-g$, and the composite of the two maps is the identity. Addition, negation, and biproduct maps define these as maps of spaces, not merely operations on homotopy classes. They are natural under additive functors. The restriction map
\[
\Map_\cE(M,N)\longrightarrow\Map_\cF(VM,VN)
\]
is therefore a retract, in the arrow category of spaces, of the corresponding map of automorphism spaces. The latter is an equivalence by full faithfulness on maximal subgroupoids. Retracts of equivalences are equivalences.
\end{proof}

\begin{prop}\label{prop:closed_proof}
Under the additional right adjoint hypothesis above, every homotopical epimorphism $f:A\to B$ is homological.
\end{prop}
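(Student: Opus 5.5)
The plan is to prove that restriction of scalars along $f$ is fully faithful on left modules, and then invoke Proposition~\ref{prop:fully_faithful}. Full faithfulness will be checked first on maximal subgroupoids and then upgraded to the whole mapping spaces using Lemma~\ref{lemma:core_additive}.

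The closedness hypothesis lets every $M\in\cC$ have an endomorphism algebra $\operatorname{End}(M)=\underline{\Hom}_\cC(M,M)$ in the sense of \cite[\S4.7.1]{HA}. Indeed, $-\otimes M$ has a right adjoint, so $M$ is enriched over itself for the left action of $\cC$ on itself. The universal property of endomorphism objects identifies left module structures with algebra maps into $\operatorname{End}(M)$. Precisely, for an algebra $R$, the fiber over $M$ of the forgetful map of maximal subgroupoids
\[
({}_R\bMod)^{\simeq}\longrightarrow\cC^{\simeq}
\]
is equivalent to $\Map_{\bAlg(\cC)}(R,\operatorname{End}(M))$, naturally in $R$. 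Applying this with $R=B$ and $R=A$, restriction along $f$ induces a map of spaces over $\cC^{\simeq}$. On the fiber over $M$ this map is
\[
f^*:\Map_{\bAlg(\cC)}(B,\operatorname{End}(M))\longrightarrow\Map_{\bAlg(\cC)}(A,\operatorname{End}(M)).
\]
Since $f$ is homotopical, Lemma~\ref{lemma:homotopical_epi_mapping_spaces}(2) says that $f^*$ is $(-1)$-truncated. A map of spaces over a common base whose fibers are $(-1)$-truncated is itself $(-1)$-truncated. Hence the induced map
\[
({}_B\bMod)^{\simeq}\longrightarrow({}_A\bMod)^{\simeq}
\]
is a monomorphism of spaces, and a monomorphism of $\infty$-groupoids is exactly a fully faithful functor. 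So restriction ${}_B\bMod\to{}_A\bMod$ is fully faithful on maximal subgroupoids.

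Next, ${}_A\bMod$ and ${}_B\bMod$ are additive, because $\cC$ is additive and tensoring preserves finite coproducts, so biproducts are computed underlying. Restriction of scalars commutes with the forgetful functors to $\cC$, so it preserves biproducts and is therefore additive. Lemma~\ref{lemma:core_additive} then upgrades full faithfulness on cores to full faithfulness of ${}_B\bMod\to{}_A\bMod$. Finally, Proposition~\ref{prop:fully_faithful} shows that $f$ is a homological epimorphism.

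The main obstacle is making the identification of the fiber over $M$ with $\Map_{\bAlg(\cC)}(R,\operatorname{End}(M))$ precise and natural in $R$. This is what guarantees that restriction along $f$ really corresponds to precomposition $f^*$ fiberwise, and not merely objectwise. I would obtain it from the fact that ${}_R\bMod$ is the fiber over $R$ of $\cL\mathrm{Mod}(\cC)\to\bAlg(\cC)$. The forgetful functor $\cL\mathrm{Mod}(\cC)\to\bAlg(\cC)\times\cC$ is classified, on its fiber over $M$, by the functor $R\mapsto\Map(R,\operatorname{End}(M))$; this uses \cite[Corollary~4.7.1.40--41]{HA}, whose hypotheses are supplied by the right adjoints $\underline{\Hom}_\cC(X,-)$. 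Restricting along $f$ in the $\bAlg(\cC)$-coordinate then gives precomposition by $f$ on these fibers, as required.
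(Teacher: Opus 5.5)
Your proposal is correct and takes essentially the same route as the paper. Both classify left module structures on a fixed object by algebra maps into $\operatorname{End}_\cC(X)$, and use Lemma~\ref{lemma:homotopical_epi_mapping_spaces} to show that restriction on maximal subgroupoids is $(-1)$-truncated; the paper phrases this through the homotopy fibers of $U_f^\simeq$ over each $A$-module, which is equivalent to your fiberwise argument over $\cC^\simeq$. You then upgrade via Lemma~\ref{lemma:core_additive} and conclude with Proposition~\ref{prop:fully_faithful} exactly as the paper does, and your attention to the naturality of the classification in $R$ makes precise a point the paper only asserts.
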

\begin{proof}
For $X\in\cC$, its internal endomorphism object
\[
\operatorname{End}_\cC(X)=\underline{\Hom}_\cC(X,X)
\]
is an associative algebra under composition. The space of left $R$-module structures on the fixed object $X$ is
\[
\operatorname{Act}_R(X)
 \simeq\Map_{\bAlg(\cC)}(R,\operatorname{End}_\cC(X));
\]
see \cite[\S4.7.1]{HA}. This classifies the entire coherent action. Since $f$ is homotopical, Lemma~\ref{lemma:homotopical_epi_mapping_spaces}, applied to $\operatorname{End}_\cC(X)$, shows that the restriction
\[
\operatorname{Act}_B(X)\longrightarrow\operatorname{Act}_A(X)
\]
is $(-1)$-truncated for every $X$. 

For an $\infty$-category $\cE$, write $\cE^\simeq$ for its maximal $\infty$-groupoid, consisting of all objects of $\cE$ and the equivalences between them. Restriction of scalars induces a map of spaces
\[
U_f^\simeq:
({}_B\bMod)^\simeq\longrightarrow({}_A\bMod)^\simeq.
\]
Its homotopy fiber over an $A$-module $Q$ is the space
of $B$-module structures extending the given
$A$-module structure, including an identification
of their restriction with $Q$. If $X$ is the underlying
object of $Q$ and $\rho_Q\in\operatorname{Act}_A(X)$
its action, this fiber identifies with
\[
\operatorname{fib}_{\rho_Q}
\bigl(
\operatorname{Act}_B(X)\longrightarrow
\operatorname{Act}_A(X)
\bigr).
\]
By the preceding argument, this space is either empty
or contractible. Thus $U_f^\simeq$ is $(-1)$-truncated,
and hence fully faithful as a functor of
$\infty$-groupoids.

The loop space of $({}_R\bMod)^\simeq$ at an $R$-module
$P$ is its automorphism space
$\operatorname{Aut}_R(P)$, consisting of the
$R$-linear self-equivalences of $P$. Consequently,
full faithfulness of $U_f^\simeq$ gives
\[
\operatorname{Aut}_B(P)
\simeq
\operatorname{Aut}_A(U_fP)
\]
for every $B$-module $P$.
Restriction preserves biproducts, and both module categories are additive. Lemma~\ref{lemma:core_additive} makes $U_f:{}_B\bMod\to{}_A\bMod$ fully faithful. Proposition~\ref{prop:fully_faithful} now implies that $B\otimes_A B\to B$ is an equivalence.
\end{proof}

Together with the first implication of Theorem~\ref{thm:main}, this proves the equivalence in the closed additive case.

\end{document}